\newcommand{\R}{{\mathbb R}}
\newcommand{\be}{\begin{eqnarray}}
\newcommand{\ben}{\begin{eqnarray*}}
\newcommand{\en}{\end{eqnarray}}
\newcommand{\enn}{\end{eqnarray*}}
\newcommand{\ba}{\backslash}
\newcommand{\pa}{\partial}
\newcommand{\ov}{\overline}
\newcommand{\curl}{{\rm curl\,}}
\newcommand{\g}{\gamma}
\newcommand{\G}{\Gamma}
\newcommand{\Om}{\Omega}
\newcommand{\La}{\Lambda}
\newtheorem{thm}{Theorem}[section]
\newtheorem{cor}{Corollary}[section]
\newtheorem{lem}{Lemma}[section]
\theoremstyle{definition}
\newtheorem{defn}{Definition}[section]
\newtheorem{rem}{Remark}[section]
\numberwithin{equation}{section}
\title{\bf Uniqueness to Inverse Acoustic and Electromagnetic Scattering From Locally Perturbed Rough Surfaces}
\author{
Yu Zhao \thanks{Shandong Normal University, Jinan 250358, China. Email: {\tt 2016020556@stu.sdnu.edu.cn}},
Guanghui Hu \thanks{Beijing Computational Science Research Center, Beijing 100193, China. Email: {\tt hu@csrc.ac.cn}},
Baoqiang Yan \thanks{Shandong Normal University, Jinan 250358, China. Email: {\tt yanbqcn@aliyun.com}}}
\date{} 
\begin{document}
\maketitle

\begin{abstract}
In this paper, we consider  inverse time-harmonic acoustic and electromagnetic scattering from locally perturbed rough surfaces in three dimensions. The scattering interface is supposed to be the graph of a Lipschitz continuous function with compact support. It is proved that an acoustically sound-soft or sound-hard surface can be uniquely determined by the far-field pattern of infinite number of incident plane waves with distinct directions. Moreover, a single point source or plane wave can be used to uniquely determine a scattering surface of polyhedral type. These uniqueness results apply to Maxwell equations with the perfectly conducting boundary condition. Our arguments rely on the mixed reciprocity relation in a half space and the reflection principle for Helmholtz and Maxwell equations.
\end{abstract}
\section{Introduction}\label{sec:intro}
The problem of half-space scattering with local perturbations has attracted widespread attention in the past two decades. Such kind of scattering problems has practical applications in radar, sonar, ocean surface detection, medical detection and so on. This paper is concerned with the time-harmonic acoustic scattering from locally perturbed rough surfaces; see  Figure \ref{fig:1} for an illustration the scattering problem in 2D where $\Gamma$ is the entire scattering interface and $\Lambda_{R}$ is a local perturbation of the unperturbed flat surface (ground plane). When the local perturbation lies  below the ground plane, it is also known as cavity scattering problems \cite{ABW2000,ABW2002,BGL2011}. In the literature, the problem considered in our context is sometimes referred to as the overfilled cavity scattering problem \cite{LWZ, Wood2006,ZZ2013}.

The forward acoustic scattering problems have been investigated intensively using integral equation or variational methods; see \cite{ABW2000, ABW2002, BGL2011, Wood2006, ZZ2013}. In the time-harmonic regime, it is well-known that the total field can be decomposed into three part: the incoming wave $u^{in}$, the reflected wave $u^{re}$ corresponding to the unperturbed scattering interface and the scattered wave $u^{sc}$ caused by the presence of local perturbations. Under the Sommerfeld radiation condition of $u^{sc}$ in the half space, one can show uniqueness and existence if the total field fulfills the Dirichlet or Neumann boundary condition, whereas the reflected waves are usually uniquely determined by the Snell's law in physics. We refer to \cite{Lip2018,CMG2017} for the mathematical modeling and analysis of the open cavity scattering problems in both the time-harmonic and time-dependent regimes.
\begin{figure}[ht]
\centering
\includegraphics[scale=0.6]{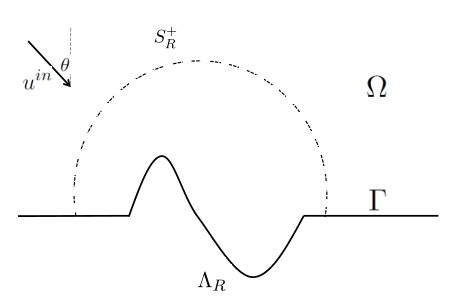}
\caption{Illustration of our scattering problem in a half plane with local perturbations in two dimensions.}
\label{fig:1}
\end{figure}

This manuscript is concerned with uniqueness to inverse scattering problems in a locally perturbed half-space.
For bounded obstacles, the first uniqueness result to inverse scattering from a sound-soft obstacle was given by Schiffer (see \cite{LPR1967}) with infinite number of plane waves with distinct incoming directions. It was shown in \cite{CS1983} by Colton and Sleeman that one plane wave is sufficient, provided a priori information on the size of the obstacle is available. Isakov \cite{Isa1990, Isa2006} proved the same uniqueness result for penetrable scatterers through the idea of  blow-up of point sources, which was later simplified by Kirsch and Kress in \cite{KK1993} and extended to other boundary conditions for impenetrable obstacles. Using the mixed reciprocity relation (see Potthast \cite{Pot2001}),  Isakov's proof can be further simplified via the reciprocity relation but still requires infinite number of incoming waves; see \cite[Theorem 5.6]{CK2013} in the acoustic case and \cite{K2002} in the Maxwell case.

In a locally perturbed half space,
Feng and Ma \cite{FM2005} show a uniqueness result with all incoming plane waves from above based on Schiffer's proof for sound-soft obstacles. The key point is to prove the linear independence of the total fields in a half space excited by distinct incident plane waves but at the same wavenumber. This would contradict the fact there are only finitely many linearly independent Dirichlet eigenfunctions in the difference domain of two sound-soft interfaces generating the same data. The first aim of this paper is to simply the proof of \cite{FM2005} by establishing the mixed reciprocity relation in a half space and then to prove uniqueness under the Neumann boundary condition.
The second aim of this paper is to prove uniqueness with a single incoming wave for polyhedral surfaces through the reflection and path arguments (see e.g., \cite{CY2003,DL1955,AR2005,EY2006,LZ2006}). Note that for bounded acoustic obstacles, uniqueness with a single plane wave was derived in \cite{CY2003,AR2005,LZ2006} for sound-soft scatterers and \cite{EY2006} for sound-hard scatterers of polyhedral type, and uniqueness with a single point source wave was deduced in \cite{HL2014}.


To the best of our knowledge, the analogue of Theorem \ref{alldirection} for general rough surfaces is not available in the literature, because the substraction of the incident plane wave from the total field does not fulfill the Sommerfeld radiation condition in general. Even using near-field data, it seems unknown how to uniquely determine a periodic sound soft surface with incoming plane waves excited at different incident directions from above.
The uniqueness results of Theorems \ref{alldirection} and \ref{harduni} carry over to impenetrable surfaces of impedance type (that is, Robin boundary condition) straightforwardly, but it is still open how to determine polyhedral surfaces of impedance type with a single incoming wave, even for bounded obstacles of impedance type.
We refer to \cite{BHY2018} for the well-posedness of forward scattering problems and the unique determination of a rectangular cavity with a single point wave under the impedance boundary condition.


The outline of the article is organized as follows. In Section \ref{sec:direct}, we introduce the mathematical model of acoustic scattering problems in a locally perturbed half space. Section \ref{sec:soft} is devoted to the unique determination of a sound-soft surface with one or multiple incident directions, while Section \ref{sec:hard} presents the uniqueness results for recovering sound-hard surfaces. The electromagnetic scattering problems will be discussed in Section \ref{elec}.

\section{Acoustic scattering problem}\label{acoustic}

\subsection{Mathematical settings}\label{sec:direct}
Let $\Gamma:=\{(\tilde{x}, x_{3}), x_{3}=f(\tilde{x})\} \subset \mathbb{R}^{3}$ be a local perturbation of the ground plane $x_{3}=0$, where $\tilde{x}:=(x_1,x_2)$ and $f$ is a Lipschitz continuous function on $\mathbb{R}^{2}$. It is supposed that $f(\tilde{x})=0$ for $|\tilde{x}|>R$, where $R>\max\{|f(\tilde{x})|:\tilde{x}\in\Gamma\}$. Obviously, $\Gamma$ consists of two parts: the flat part $\Gamma\cap\{x:|x|>R\}$ and the perturbed part $\Lambda_{R}:=\Gamma\cap\{(\tilde{x},x_{3}),|x|<R\}$. Denote by $\Omega:=\{(\tilde{x},x_{3}),x_{3}>f(\tilde{x})\}$ the region above $\Gamma$, which is supposed to be filled with an infinite isotropic and homogeneous acoustic medium. The incident field is given by the time-harmonic acoustic plane wave
$$U^{in}(x;t)=u^{in}(x)e^{-i\omega t}=e^{i(kx\cdot d-\omega t)},$$
where $k=\omega/{c_{0}}>0$ is the wave number, $\omega$ is the frequency, $c_{0}$ is the speed of sound and $d$ is the direction of propagation. Assume that $u^{in}$ is incident onto $\Gamma$ from $\Omega$. The wave propagation can be modeled by the Helmholtz equation
\be\label{2.1}
\triangle u+k^{2}u=0~~~~\mbox{in}~\Omega,
\en
where $u$ denotes the total field. If the medium below $\Gamma$ is sound soft, there holds the Dirichlet boundary condition $u=0$ on $\Gamma$. If it is sound hard, we have the Neumann boundary condition $\partial_{\nu}u=0$ on $\Gamma$, where $\nu\in \mathbb{S}^{2}:=\{x\in\mathbb{R}^3, |x|=1\}$ is the unit normal vector at $\Gamma$ directed into $\Omega$. It is well known from well-posedness of forward scattering problems (see e.g. \cite{Lip2018}) that the total field includes three parts: the incoming wave $u^{in}$, the scattered wave $u^{sc}$ and the reflected wave $u^{re}$. The scattered wave $u^{sc}$ is caused by the perturbed part and $u^{re}$ corresponds to the unperturbed scattering interface $\{x_{3}=0\}$. Here, the sum $\widetilde{u}^{in}:=u^{in}+u^{re}$ solves the unperturbed scattering problem that corresponds to the flat surface $\{x_3=0\}$. Then, the total field can be rewritten as
$$u=\widetilde{u}^{in}+u^{sc}~~~~\mbox{in} ~\Omega.$$
Without loss of generality, we suppose that the incident direction is of the form
 $d=(\alpha, \beta, -\gamma)\in \mathbb{S}^{2}_{-}:=\{x\in \mathbb{S}^{2},x_{3}<0\}$, so that the incident field takes the form
$$u^{in}(x;d)=e^{ikx\cdot d}=e^{ik(\alpha x_{1}+\beta x_{2}-\gamma x_{3})},$$
where $\alpha=\sin \varphi \cos\theta , ~\beta=\sin \varphi \sin \theta, ~\gamma= \cos \varphi$ with the incident angles $\varphi \in (-\frac{\pi}{2},\frac{\pi}{2}),~\theta \in (0,2\pi)$. By Snell's law, the reflected wave of the plane wave $u^{in}(x;d)$ incident onto $\{x_3=0\}$ takes the explicit form
\[
u^{re}(x;d)=
\left\{\begin{array}{l}
-e^{ik(\alpha x_{1}+\beta x_{2}+\gamma x_{3})}~~~~\mbox{in the Dirichlet case},\\
e^{ik(\alpha x_{1}+\beta x_{2}+\gamma x_{3})}~~~~~~\mbox{in the Neumann case}.
\end{array}
\begin{array}{l}
\end{array}\right.
\]
Let $B^{+}_{R}:=\{|x|<R: x_{3}>0\}$ be the upper half-ball containing the local perturbation $\Lambda_{R}$ and let $S^{+}_{R}:=\{|x|=R: x_{3}>0\}$ be the boundary of $B^{+}_{R}$. In the upper half space, the scattered field $u^{sc}$ is required to satisfy the Sommerfeld radiation condition
\be\label{2.2}
\lim\limits_{r\rightarrow\infty}r\{\partial_{r} u^{sc}(x;d)-iku^{sc}(x;d)\}= 0,
\en
where $r=|x|$, $x\in \Omega$ and $d\in \mathbb{S}^{2}_{-}$. It is well known from \cite[Definition 2.4]{CK2013} that the Sommerfeld solution $u^{sc}$ admits the asymptotic behavior
\be\label{2.3}
u^{sc}(x;d)=\frac{e^{ik|x|}}{|x|}\left\{u_{\infty}^{sc}(\hat{x};d)+O\left(\frac{1}{|x|}\right)\right\}, ~|x|\rightarrow\infty,~x\in\Omega,
\en
where $\hat{x}:=\frac{x}{|x|}\in \mathbb{S}^{2}_{+}:=\{x\in \mathbb{S}^{2},x_{3}>0\}$ and $u_{\infty}^{sc}(\hat{x})$ is the far-field pattern of $u^{sc}$. Note that in \eqref{2.3}, we have emphasized the dependance of $u^{sc}$ and $u^{sc}_\infty$ on the incident direction $d$.

The direct scattering problem (DP) is stated as follows.
\begin{description}
\item[(\textbf{DP}):]Give an incoming plane wave $u^{in}$ and a locally perturbed surface $\Gamma \subset \mathbb{R}^{3}$, determine the total field $u$ in $\Omega$ under the Dirichlet or Neumann boundary condition.
\end{description}

We are interested in the following inverse problem (IP) arising from acoustic scattering.
\begin{description}
\item[\bf (IP):] Determine the shape of $\Gamma$ (more precisely, $\Lambda_R$) from knowledge of the far-field patterns $u_{\infty}^{sc}(\hat{x};d)$ for all observation directions $\hat{x}\in \mathbb{S}^2_{+}$ corresponding to one or many incident plane waves with directions $d\in \mathbb{S}^{2}_{-}$ and a fixed wave number.
\end{description}

This section is concerned with uniqueness to (IP) under the Dirichlet or Neumann boundary condition. We shall prove in Theorems \ref{alldirection} and \ref{harduni} that the data of all incident directions can be used to determine $\Lambda_R$ uniquely, while a single direction is sufficient if $f$ is piecewise linear. The electromagnetic scattering problem will be discussed in Section \ref{PolyRef}.

\subsection{Uniqueness in determining sound-soft surfaces}\label{sec:soft}
In the following theorem, we use the reflection principle for the Helmholtz equation (see e.g. \cite{DL1955}) to extend the scattered field to a lower half space and then apply
 Rellich's lemma (cf. \cite[Lemma 2.12]{CK2013}) to prove the one-to-one correspondence between the scattered field and its far field pattern in a half space.
\begin{thm}\label{farscat}
Assume that $\Gamma_{j}:=\{x:x_{3}=f_{j}(\tilde{x})\}(j=1,2)$ are two sound-soft surfaces with local perturbations such that their far-field patterns coincide for the incident plane wave with the direction $d\in \mathbb{S}^{2}_{-}$. Then, it holds that
$$u^{sc}_{1}(x;d)=u^{sc}_{2}(x;d),$$
for all $x\in \Omega_{1}\cap \Omega_{2}$, where $\Omega_{j}:=\{x\in\mathbb{R}^{3}: x_{3}>f_{j}(\tilde{x})\}, j=1,2$.
\end{thm}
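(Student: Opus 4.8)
The plan is to convert this half-space statement into a classical exterior scattering problem by an odd reflection of the scattered field across the flat part of the interface, and then to invoke Rellich's lemma in $\mathbb{R}^3$.

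First I would enlarge $R$ (one choice may be used for both surfaces) so that outside the ball $B_R$ both $\Gamma_1$ and $\Gamma_2$ coincide with the hyperplane $\{x_3=0\}$ and all perturbations are contained in $B_R^+$. On $\Sigma:=\{x_3=0,\ |x|>R\}\subset\Gamma_1\cap\Gamma_2$ the sound-soft condition gives $u_j=0$, while $\widetilde u^{in}=u^{in}+u^{re}=0$ there by the explicit form of $u^{re}$ in the Dirichlet case (Snell's law); hence $u^{sc}_j=u_j-\widetilde u^{in}=0$ on $\Sigma$. Since $\Sigma$ is a flat portion of $\partial\Omega_j$, boundary elliptic regularity makes $u^{sc}_j$ real-analytic up to $\Sigma$, so by the reflection principle for the Helmholtz equation \cite{DL1955} the odd extension
\[
\widetilde u^{sc}_j(\tilde x,x_3):=\begin{cases}u^{sc}_j(\tilde x,x_3), & x_3\ge 0,\\ -u^{sc}_j(\tilde x,-x_3), & x_3<0,\end{cases}\qquad |x|>R,
\]
is a solution of $\triangle\widetilde u^{sc}_j+k^2\widetilde u^{sc}_j=0$ in the whole exterior $\{|x|>R\}$.

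Next I would check that $\widetilde u^{sc}_j$ is a radiating solution in $\{|x|>R\}$ with far-field pattern equal to $u^{sc}_{j,\infty}(\hat x;d)$ on $\mathbb{S}^2_+$ and to $-u^{sc}_{j,\infty}(\hat x_1,\hat x_2,-\hat x_3;d)$ on $\mathbb{S}^2_-$: this is immediate from \eqref{2.3}, since for $x_3<0$ one has $\widetilde u^{sc}_j(x)=-u^{sc}_j(y)$ with $y=(\tilde x,-x_3)$, $|y|=|x|$, $\hat y\in\mathbb{S}^2_+$, and the Sommerfeld condition \eqref{2.2} is inherited by the same substitution. By hypothesis $u^{sc}_{1,\infty}(\cdot;d)=u^{sc}_{2,\infty}(\cdot;d)$ on $\mathbb{S}^2_+$, so $\widetilde u^{sc}_{1,\infty}\equiv\widetilde u^{sc}_{2,\infty}$ on all of $\mathbb{S}^2$. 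Applying Rellich's lemma \cite[Lemma 2.12]{CK2013} to $\widetilde u^{sc}_1-\widetilde u^{sc}_2$, a radiating solution of the Helmholtz equation in $\{|x|>R\}$ with vanishing far-field pattern, yields $\widetilde u^{sc}_1\equiv\widetilde u^{sc}_2$ in $\{|x|>R\}$; restricting to $x_3>0$ gives $u^{sc}_1=u^{sc}_2$ on the nonempty open set $\{x_3>0,\ |x|>R\}$.

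Finally I would pass to $\Omega_1\cap\Omega_2$. This set equals $\{(\tilde x,x_3):x_3>\max(f_1(\tilde x),f_2(\tilde x))\}$, which is open and connected; $u^{sc}_1-u^{sc}_2$ solves the Helmholtz equation on it and vanishes on the open subset $\{x_3>0,\ |x|>R\}$, so by unique continuation (real-analyticity of solutions of the Helmholtz equation) it vanishes identically on $\Omega_1\cap\Omega_2$, which is the assertion. The one genuinely new step, and the expected main obstacle, is the reflection in the second paragraph: Rellich's lemma cannot be applied to $u^{sc}_j$ itself, because it is a radiating solution only in the upper half space and its far-field pattern is known only on the hemisphere $\mathbb{S}^2_+$. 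Two features make the reflection legitimate and must be used with care: the interface is exactly planar outside a compact set, and the reflected wave $u^{re}$ cancels $u^{in}$ on that plane precisely because of the Dirichlet relation. The parallel cancellation of normal derivatives in the Neumann case is what will later furnish the sound-hard analogue via an even reflection.
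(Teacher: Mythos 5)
Your proposal is correct and follows essentially the same route as the paper: odd reflection of $u^{sc}_j$ across the flat part $\{x_3=0,\,|x|>R\}$ (justified by the vanishing of $u_j$ and of $\widetilde u^{in}=u^{in}+u^{re}$ there), extension of the far-field pattern to all of $\mathbb{S}^2$ with the odd symmetry relation, Rellich's lemma applied to the difference of the extended fields, and unique continuation down to $\Omega_1\cap\Omega_2$. The only cosmetic difference is that you invoke boundary regularity/analyticity up to the plane where the paper cites a matching lemma for the Cauchy data across $\{x_3=0\}$; both serve the same purpose.
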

\begin{proof}
Let $\Lambda_{R}^{(j)}:=\Gamma_{j}\cap\{|x|<R\} (j=1,2)$ be the perturbed part of $\Gamma_{j}$ for some $R>\max\{|f_j(\tilde{x})|:\tilde{x}\in\Gamma_{j}\}$. We use $u_{j}^{sc}$ to represent the scattered field of $\Gamma^{(j)}$ and let $u_{j,\infty}^{sc}(\hat{x};d)$ be the far-field pattern of $u_{j}^{sc}(x;d)$, where $x\in\Omega_j$, $\hat{x}\in \mathbb{S}_{+}^{2}$ and $d\in \mathbb{S}^{2}_{-}$.

Apparently, $u^{sc}_{1}$ satisfies the Helmholtz equation in $\Omega_{1}$. We can extend $u^{sc}_{1}$ from $\mathbb{R}^{3}_{+}\backslash \overline{B^{+}_{R}}$ to $\mathbb{R}^{3}\cap\{|x|>R\}$ by\\
\be\label{3.1}
 V^{sc}_{1}(\tilde{x},x_{3})=
\left\{\begin{array}{l}
u^{sc}_{1}(\tilde{x},x_{3}),~~~~~~~|x|>R, ~x_{3}>0,\\
u^{sc}_{1}(\tilde{x},0),~~~~~~~~~|x|>R,~x_{3}=0,\\
-u^{sc}_{1}(\tilde{x},-x_{3}),~~~|x|>R,~x_{3}<0.
\end{array}
\begin{array}{l}
\end{array}\right.
\en
It is easy to find
$$\triangle V^{sc}_{1}(x)+k^{2}V^{sc}_{1}(x)=0~~~~\mbox{in}~\{x:|x|>R, x_3\neq0\}.$$
Besides, we have
$$\lim\limits_{x_{3}\rightarrow 0^{+}} V^{sc}_{1}(x)=\lim\limits_{x_{3}\rightarrow 0^{-}} V^{sc}_{1}(x)$$
and
$$\lim\limits_{x_{3}\rightarrow 0^{+}} \partial_{x_3} V^{sc}_{1}(x)=\lim\limits_{x_{3}\rightarrow 0^{-}} \partial_{x_3} V^{sc}_{1}(x),$$
for all $|x|>R$. Applying \cite[Lemma 6.13]{K2011}, it can be shown that
$$\triangle V^{sc}_{1}(x) + k^{2}V^{sc}_{1}(x) = 0~\mbox{in}~\mathbb{R}^{3}\cap\{|x|>R\}.$$
Analogously, one can extend $u_{2}^{sc}$ by $V_{2}^{sc}$ from $\mathbb{R}^{3}_{+}\backslash \overline{B^{+}_{R}}$ to $\mathbb{R}^{3}\cap\{|x|>R\}$ in the same way as for $u_{1}^{sc}$.

By the definition of $V_{j}^{sc}$ and \eqref{2.3}, $V_{j}^{sc}(x;d)$ satisfies the Sommerfeld radiation condition in $|x|>R$. In fact, for $x_3<0$ we have
\ben
V_j^{sc}(x;d)=\frac{e^{ik|x|}}{|x|}\{-V_{j,\infty}^{sc}(\hat{x}';d)+O(\frac{1}{|x|})\}, ~|x|\rightarrow\infty,
\enn
where $\hat{x}'\in \mathbb{S}^{2}$ denotes the reflection of $\hat{x}$ about the $x_3$ axis.
Hence, we can define $V_{j,\infty}^{sc}(\hat{x};d)$ as the far-field pattern of $V_{j}^{sc}(x;d)$ for all $\hat{x}\in\mathbb{S}^2$. Moreover, we obtain $V_{j,\infty}^{sc}(\hat{x};d)=-V_{j,\infty}^{sc}(\hat{x}';d)$. Therefore, it follows from $u_{1,\infty}^{sc}(\hat{x};d)=u_{2,\infty}^{sc}(\hat{x};d)$, $\hat{x}\in\mathbb{S}^{2}_{+}$ that $V_{1,\infty}^{sc}(\hat{x};d)=V_{2,\infty}^{sc}(\hat{x};d)$ for all $\hat{x}\in\mathbb{S}^{2}$. By the asymptotic behavior of the scattered fields, we can get
\ben
W(x;d)&:=& V^{sc}_{1}(x;d)-V^{sc}_{2}(x;d)\\
&=&\frac{e^{ik|x|}}{|x|} \{V_{1,\infty}^{sc}(\hat{x};d)-V_{2,\infty}^{sc}(\hat{x};d)+O(\frac{1}{|x|})\}\\
&=&O(\frac{1}{|x|^{2}}),~~|x|\rightarrow\infty
\enn
which implies that
$$\lim\limits_{R\rightarrow\infty} \int_{S_{R}} |W(x;d)| ^{2} ds(x)=0.$$
On the other hand, $W(x;d)$ is a radiation solution to the Helmholtz equation in $|x|>R$. By Rellich's Lemma (cf. \cite[Lemma 2.12]{CK2013}), we can obtain $W(x;d)=0$ in $|x|>R$. Consequently, we have
$$u^{sc}_{1}(x;d)=u^{sc}_{2}(x;d)~~~~\mbox{in}~ \mathbb{R}^{3}_{+} \cap \{|x|>R\}.$$
Recalling the unique continuation principle for the Helmholtz equation (cf. \cite[Theorem 8.6]{CK2013}), we obtain
$$u^{sc}_{1}(x;d)=u^{sc}_{2}(x;d)~~~~\mbox{in}~\Omega_{1}\cap \Omega_{2}.$$
\end{proof}
\begin{rem}\label{remark 2.1}
The proof of Theorem \ref{farscat} extends to the Neumann boundary condition trivially. Further, we think that the impedance case can be treated analogously by applying the corresponding reflection principle (see \cite{DL1955}). However, it remains unclear to us how to handle the transmission interface conditions.
\end{rem}
\subsubsection{Uniqueness with multiple incident waves}\label{sec:unimult}
The aim of this section is to show that a sound-soft surface can be uniquely determined by multiple incident waves with distinct directions. Similar to plane waves, below we describe the reflected field for point source waves. The fundamental solution of the Helmholtz equation \eqref{2.1} is given by
$$\Phi (x,y):=\frac{1}{4\pi} \frac{e^{ik|x-y|}}{|x-y|},~~x\neq y.$$
For the scattering of the point source wave
$$w^{in}(x;z)=\Phi(x;z),~~x\neq z ,$$
where $z=(z_{1}, z_{2}, z_{3})\in\Omega$,
we denote the scattered field by $w^{sc}$, the reflected field by $w^{re}$, the total field by $w$ and the far-field pattern by $w^{sc}_{\infty}$. Note that $w^{sc}$ is caused by the local perturbation and $w^{re}$ is resulted from the unperturbed scattering interface. Similar to the plane wave case, the total wave can be written as $w=w^{in}+\widetilde{w}^{sc}$, where $\widetilde{w}^{sc}$ is analytic in $\Omega$ and is of the form
\[
\widetilde{w}^{sc}(x;z):=
\left\{\begin{array}{l}
w^{re}(x;z)+w^{sc}(x;z)~~~~\mbox{in}~\Omega\cap\{|x|>R\},\\
\widetilde{w}^{sc}(x;z)~~~~~~~~~~~~~~~~~~~\mbox{in}~\Omega\cap\{|x|<R\}.
\end{array}
\begin{array}{l}
\end{array}\right.
\]
By the definition of point source wave and the Snell's law, we obtain
\[
w^{re}(x;z)=
\left\{\begin{array}{l}
-\frac{1}{4\pi} \frac{e^{ik|x-z'|}}{|x-z^{'}|},~~x\neq z',~~~~\mbox{in the Dirichlet case,}\\
\frac{1}{4\pi} \frac{e^{ik|x-z'|}}{|x-z^{'}|},~~x\neq z',~~~~~~\mbox{in the Neumann case,}
\end{array}
\begin{array}{l}
\end{array}\right.
\]
where $z':=(z_{1}, z_{2}, -z_{3})$.
Obviously, the scattered field of the point source wave can be written as
\[
\ w^{sc}(x;z):=
\left\{\begin{array}{l}
w^{sc}(x;z)~~~~~~~~~~~~~~~~~~~\mbox{in}~\Omega\cap\{|x|>R\},\\
\widetilde{w}^{sc}(x;z)-w^{re}(x;z)~~~~\mbox{in}~\Omega\cap\{|x|<R\}.
\end{array}
\begin{array}{l}
\end{array}\right.
\]
It is obvious that $w^{sc}$ is singular in $\Omega\cap\Omega'$ when $z\in\Omega\cap\Omega'$, where $\Omega':=\{x': x\in\Omega\}$. In addition, we still require the scattered field $w^{sc}(x;z)$ to satisfy the radiation condition
\be\label{3.3}
\lim\limits_{r\rightarrow\infty}r\left\{\partial_{r} w^{sc}(x;z)-ik w^{sc}(x;z)\right\}=0,
\en
where $r=|x|$, $x\in\Omega$, leading to the asymptotic behavior of $w^{sc}(x;z)$ as following:
\be\label{3.4}
w^{sc}(x;z)=\frac{e^{ik|x|}}{|x|}\left\{w^{sc}_{\infty}(\hat{x};z)+O\left(\frac{1}{|x|}\right)\right\},~~|x|\rightarrow\infty, x\in\Omega.
\en
Here $w_{\infty}^{sc}(\hat{x})$, $\hat{x}\in\mathbb{S}_{+}^{2}$ is the far-field pattern of $w^{sc}$.

In the following, we will introduce the mixed reciprocity relation between the scattered field of a plane wave and the far-field pattern excited by a point source wave. The mixed reciprocity relation was first proposed by Potthast \cite[Theorem 2.1.4]{Pot2001} for bounded obstacles. We refer to  Colton and Kress \cite[Theorem 5.6]{CK2013} for the application of the mixed reciprocity relation to a simplified uniqueness proof with infinitely many plane waves. In this paper, we consider acoustic wave scattering from unbounded surfaces, which is different from the arguments used in \cite{CK2013, Pot2001} for bounded obstacles. Following the lines of \cite[Theorem 3.16]{CK2013}, below we present the mixed reciprocity relation for scattering from locally perturbed rough surfaces, the proof of which mainly relies on Green's identities.
\begin{lem}\label{mixed}
We have the mixed reciprocity relation
$$4\pi w^{sc}_{\infty}(-d;z)=u^{sc}(z;d),$$
$where~~d\in \mathbb{S}^{2}_{-},~z\in \Omega.$
\end{lem}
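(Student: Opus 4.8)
The plan is to adapt the Green's-identity proof of the classical mixed reciprocity relation \cite[Theorem 3.16]{CK2013} to the locally perturbed half space, replacing the full-space plane wave by the half-space incident field $\widetilde{u}^{in}(\cdot;d)=u^{in}(\cdot;d)+u^{re}(\cdot;d)$. The key structural observation is that, although $w^{re}(\cdot;z)$ and $w^{sc}(\cdot;z)$ may each be singular at $z':=(z_{1},z_{2},-z_{3})$, this singularity cancels in $\widetilde{w}^{sc}(\cdot;z)$, which is analytic in $\Omega$; hence the point-source total field $w(\cdot;z)=\Phi(\cdot;z)+\widetilde{w}^{sc}(\cdot;z)$ has in $\Omega$ only the single singularity at $z$. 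First I would record the boundary data $u(\cdot;d)=\widetilde{u}^{in}(\cdot;d)+u^{sc}(\cdot;d)=0$ and $w(\cdot;z)=\Phi(\cdot;z)+\widetilde{w}^{sc}(\cdot;z)=0$ on $\Gamma$ (Dirichlet case), and note that $u^{sc}(\cdot;d)$, $w^{sc}(\cdot;z)$, $\Phi(\cdot;z)$ and hence $\widetilde{w}^{sc}(\cdot;z)$ all satisfy the Sommerfeld condition in the upper half space. In particular, the far-field pattern of $w(\cdot;z)$ for $\hat{x}\in\mathbb{S}^{2}_{+}$ is $\frac{1}{4\pi}(e^{-ik\hat{x}\cdot z}-e^{-ik\hat{x}\cdot z'})+w^{sc}_{\infty}(\hat{x};z)$, which at $\hat{x}=-d$ equals precisely $\frac{1}{4\pi}\widetilde{u}^{in}(z;d)+w^{sc}_{\infty}(-d;z)$.

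Next I would fix $\rho>\sqrt{2}\,R$, so that $z\in\Omega\cap\{|x|<\rho\}$ and, using $f\equiv0$ for $|\tilde{x}|>R$, one has $\partial(\Omega\cap\{|x|<\rho\})=\Gamma_{\rho}\cup S^{+}_{\rho}$ with $\Gamma_{\rho}:=\Gamma\cap\{|x|<\rho\}$ and $S^{+}_{\rho}:=\{|x|=\rho,\,x_{3}>0\}$ the full upper hemisphere (a short geometric check). Applying Green's second identity to the pair $u(\cdot;d)$, $w(\cdot;z)$ on $\Omega\cap\{|x|<\rho\}$ and using $(\triangle+k^{2})w(\cdot;z)=-\delta_{z}$, $(\triangle+k^{2})u(\cdot;d)=0$ in $\Omega$, the volume term collapses to $-u(z;d)$; since $u(\cdot;d)=w(\cdot;z)=0$ on $\Gamma$, the surface integral over $\Gamma_{\rho}$ vanishes, leaving
$$-u(z;d)=\int_{S^{+}_{\rho}}\bigl\{u(y;d)\,\partial_{r}w(y;z)-w(y;z)\,\partial_{r}u(y;d)\bigr\}\,ds(y).$$
Writing $u=\widetilde{u}^{in}+u^{sc}$, $w=\Phi(\cdot;z)+\widetilde{w}^{sc}$ and letting $\rho\to\infty$, every term of the integrand in which both factors satisfy the radiation condition vanishes by the standard argument based on $\partial_{r}v-ikv=o(1/|y|)$, so that only $\int_{S^{+}_{\rho}}\{\widetilde{u}^{in}(y;d)\,\partial_{r}w(y;z)-w(y;z)\,\partial_{r}\widetilde{u}^{in}(y;d)\}\,ds(y)$ survives.

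The crux --- and the step I expect to be the main obstacle --- is the half-space counterpart of the plane-wave far-field identity: for every $v$ radiating in the upper half space with far-field pattern $v_{\infty}$ on $\mathbb{S}^{2}_{+}$,
$$\lim_{\rho\to\infty}\int_{S^{+}_{\rho}}\bigl\{\widetilde{u}^{in}(y;d)\,\partial_{r}v(y)-v(y)\,\partial_{r}\widetilde{u}^{in}(y;d)\bigr\}\,ds(y)=-4\pi\,v_{\infty}(-d).$$
To prove it I would insert $v(y)=\tfrac{e^{ik|y|}}{|y|}v_{\infty}(\hat{y})+O(|y|^{-2})$ together with $\widetilde{u}^{in}(y;d)=e^{iky\cdot d}\mp e^{iky\cdot d'}$ ($-$ in the Dirichlet and $+$ in the Neumann case, where $d'=(\alpha,\beta,\gamma)$ is the reflection of $d$ about $\{x_{3}=0\}$), reduce the integral to $ik\rho\,e^{ik\rho}\int_{\mathbb{S}^{2}_{+}}v_{\infty}(\hat{y})\bigl[e^{ik\rho\,\hat{y}\cdot d}(1-d\cdot\hat{y})\mp e^{ik\rho\,\hat{y}\cdot d'}(1-d'\cdot\hat{y})\bigr]\,d\hat{y}+o(1)$, and apply stationary phase on $\mathbb{S}^{2}_{+}$: since $d\in\mathbb{S}^{2}_{-}$, the only stationary point of $\hat{y}\mapsto\hat{y}\cdot d$ in $\mathbb{S}^{2}_{+}$ is $\hat{y}=-d$, contributing $-4\pi v_{\infty}(-d)$, whereas the stationary point $\hat{y}=d'\in\mathbb{S}^{2}_{+}$ of the reflected term contributes nothing because the amplitude $1-d'\cdot\hat{y}$ vanishes there. (Alternatively, in the Dirichlet case one can extend $v$ to $\{|x|>R\}$ by the odd reflection of Theorem \ref{farscat}, which leaves the odd function $\widetilde{u}^{in}$ unchanged, and invoke the classical full-sphere identity, using that the extended field has far-field pattern $v_{\infty}$ on $\mathbb{S}^{2}_{+}$ and $-v_{\infty}$ at the mirrored direction on $\mathbb{S}^{2}_{-}$; the even reflection plays the same role in the Neumann case.)

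Granting the identity and applying it with $v=w(\cdot;z)$, the right-hand side tends to $-4\pi$ times the far-field pattern of $w(\cdot;z)$ at $-d$, i.e.\ to $-\widetilde{u}^{in}(z;d)-4\pi w^{sc}_{\infty}(-d;z)$, while the left-hand side is $-u(z;d)=-\widetilde{u}^{in}(z;d)-u^{sc}(z;d)$. Cancelling $\widetilde{u}^{in}(z;d)$ yields $u^{sc}(z;d)=4\pi w^{sc}_{\infty}(-d;z)$, which is the assertion. The Neumann case follows verbatim after changing the signs in $u^{re}$, $w^{re}$ and in the boundary conditions on $\Gamma$; the reflected contribution to the far-field identity again drops out since $1-d'\cdot\hat{y}$ vanishes at $\hat{y}=d'$.
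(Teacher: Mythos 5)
Your proposal is correct, but it follows a genuinely different route from the paper. The paper proves the lemma in the style of the classical Potthast/Colton--Kress argument: it splits into three cases according to the position of $z$ (to handle the singularities of $w^{re}(\cdot;z)$ and $w^{sc}(\cdot;z)$ at the mirror point $z'$), applies Green's second identity repeatedly on $D_{R_1}\setminus\overline{D_R}$ and $D_R$ to the pairs $(u^{sc},w^{sc})$, $(u^{sc},\widetilde{w}^{in})$, $(\widetilde{u}^{in},\widetilde{w}^{in})$, $(w^{sc},\widetilde{w}^{in}(\cdot;y))$, collects the resulting identities over $\Lambda_R$ (with residue terms such as $u^{sc}(z';d)$ that eventually cancel), and produces the far-field pattern by letting an auxiliary source point $y$ tend to infinity via the asymptotics $\widetilde{w}^{in}(x;y)\approx\frac{1}{4\pi}\frac{e^{ik|y|}}{|y|}(e^{-ik\hat{y}\cdot x}-e^{-ik\hat{y}\cdot x'})$, before combining everything with the boundary condition on $\Gamma$. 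You instead apply a single Green identity to the \emph{total} fields $u(\cdot;d)$ and $w(\cdot;z)$ on $\Omega\cap\{|x|<\rho\}$, using $(\triangle+k^2)w(\cdot;z)=-\delta_z$ and the fact that $\widetilde{w}^{sc}(\cdot;z)$ is analytic in $\Omega$ (so the $z'$ singularity never enters -- this is essentially the simplification suggested in Remark 2.2(i), pushed further); the boundary term on $\Gamma$ dies immediately from the Dirichlet condition, and all the work is concentrated in a half-space analogue of the plane-wave/far-field pairing identity over large hemispheres, which you evaluate by stationary phase, observing that the reflected plane wave's stationary point $\hat{y}=d'$ carries the vanishing amplitude $1-d'\cdot\hat{y}$. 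Your bookkeeping of signs and of the far field of $w(\cdot;z)$ at $\hat{x}=-d$ is correct, and the final cancellation of $\widetilde{u}^{in}(z;d)$ gives exactly the stated relation. Two points deserve explicit justification in a write-up: the equator of $\mathbb{S}^2_+$ contributes only lower-order terms in the hemispherical stationary-phase evaluation, and the far-field expansion of $w(\cdot;z)$ must be uniform up to the equator; both are handled cleanly by the odd (resp.\ even) reflection extension of Theorem \ref{farscat} that you mention as the alternative, which reduces the key identity to the classical full-sphere one. Your route avoids the paper's three-case analysis and the web of four integral identities, at the price of proving the hemisphere pairing lemma; the paper's route stays with elementary Green identities over bounded surfaces and the explicit asymptotics of a receding point source.
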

\begin{rem}\label{remark 2.2}
\begin{itemize}
\item[(i)]
Difficulties in proving Lemma \ref{mixed} arise from the fact that the reflected and scattered fields corresponding to a point source incidence are singular if the source position $z$ is located in $\Omega\cap\Omega'$ (that is, $z'\in \Omega$), which will be particularly treated in the proof below. However, one can also avoid this by firstly proving Lemma \ref{mixed} for $z\in\Omega$ such that $z'\notin \Omega$ and then applying the analyticity of $w_\infty^{sc}(\cdot, z)$ and $u^{sc}(z; \cdot)$ in $z\in \Omega$.
\item[(ii)]
The reciprocity relation shown in Lemma \ref{mixed} together with the fact that
\ben
4\pi w^{re}_\infty(-d;z)=u^{re}(z;d)=-e^{ik d\cdot z'}
\enn yields the relation $4\pi \widetilde{w}^{sc}_{\infty}(-d;z)=\widetilde{u}^{sc}(z;d)$, where $\widetilde{u}^{sc}=u^{re}+u^{sc}$.
\end{itemize}
\end{rem}

\begin{proof}
Let $\Lambda_{R}=\Gamma\cap B_R$ be the local perturbation and let $\nu$ be the unit normal vector at $\Gamma$ directed into upper half space. Obviously,
\be\label{3.5}
\widetilde{u}^{in}(x;d)=\widetilde{w}^{in}(x;z)=0~~~~\mbox{on} ~\{x_{3}=0\},
\en
where $\widetilde{w}^{in}(x;z):=w^{in}(x;z)+w^{re}(x;z)$, $z\in \Omega$. It follows from the Dirichlet boundary condition and \eqref{3.5} that
\be\label{3.6}
u^{sc}(x;d)=w^{sc}(x;z)=0~~~~\mbox{on}~\Gamma\cap \{|x|>R\}.
\en

Since $z\in \Omega$, our proof can be divided into three cases: $(\romannumeral1)$ $z\in \Omega\backslash D_{R}$, where $D_{R}:=\{x\in \Omega:|x|<R\}$; $(\romannumeral2)$ $z\in D_{R}\backslash\Omega'$; $(\romannumeral3)$ $z\in \Omega\cap \Omega'$. Since the reflected field is singular in the case (iii), below we shall discuss the last case only.

 Choose $R_{1}>R$ and set $D_{R_{1}}:=\{x\in \Omega: |x|<R_{1}\}$. Applying the Green's second theorem of $u^{sc}$ and $w^{sc}$ in $D_{R_{1}}\backslash \overline{D_{R}}$ and using \eqref{3.6}, we obtain
$$0=\int_{D_{R_{1}}\backslash \overline{D_{R}}}\{u^{sc}(x;d)\Delta w^{sc}(x;z)-w^{sc}(x;z)\Delta u^{sc}(x;d)\}dx$$
$$~~~~~~~~~~~~=(\int_{S_{R_{1}}^{+}}-\int_{S_{R}^{+}})\{u^{sc}(x;d)\frac{\partial w^{sc}(x;z)}{\partial \nu(x)}-w^{sc}(x;z)\frac{\partial u^{sc}(x;d)}{\partial \nu(x)}\}ds(x),$$
where $S_{R_1}^{+}:=\{|x|=R_1:x_3>0\}$.
Letting $R_{1}\rightarrow \infty$ in the previous identity and using the radiation condition of $u^{sc}$ and $w^{sc}$, we get
$$0=\int_{S_{R}^{+}}\{u^{sc}(x;d)\frac{\partial w^{sc}(x;z)}{\partial \nu(x)}-w^{sc}(x;z)\frac{\partial u^{sc}(x;d)}{\partial \nu(x)}\}ds(x).$$
Since $w^{sc}(x;z)$ is singular at $x=z'\in \Omega\cap \Omega'$,  applying the Green's formula of $u^{sc}$ and $w^{sc}$ to $D_{R}$ yields
\be \nonumber
&&\int_{\Lambda_{R}}\{u^{sc}(x;d)\frac{\partial w^{sc}(x;z)}{\partial \nu}-w^{sc}(x;z)\frac{\partial u^{sc}(x;d)}{\partial \nu}\}ds(x)\\ \label{3.7}
&&=\int_{S_{R}^{+}}\{u^{sc}(x;d)\frac{\partial w^{sc}(x;z)}{\partial \nu}-w^{sc}(x;z)\frac{\partial u^{sc}(x;d)}{\partial \nu}\}ds(x)+u^{sc}(z';d)\\ \nonumber
&&=u^{sc}(z';d).
\en

Using the Green's second theorem of $u^{sc}$ and $\widetilde{w}^{in}$ yields
$$0=(\int_{S_{R_{1}}^{+}}-\int_{S_{R}^{+}})\{u^{sc}(x;d)\frac{\partial \widetilde{w}^{in}(x;z)}{\partial \nu(x)}-\widetilde{w}^{in}(x;z)\frac{\partial u^{sc}(x;d)}{\partial \nu(x)}\}ds(x).$$
As the proof of \eqref{3.7}, we obtain
\be\label{3.8}
0=\int_{S_{R}^{+}}\{u^{sc}(x;d)\frac{\partial \widetilde{w}^{in}(x;z)}{\partial \nu(x)}-\widetilde{w}^{in}(x;z)\frac{\partial u^{sc}(x;d)}{\partial \nu(x)}\}ds(x).
\en
Because $z'\in \Omega$, it is obvious that $\widetilde{w}^{in}(x;z)$ is singular at $x=z$ and $x=z'$. Similarly, using the Green's formula of $u^{sc}$ and $\widetilde{w}^{in}$ in $D_{R}$, we get
\be\label{3.9}
u^{sc}(z;d)-u^{sc}(z';d)=\int_{\Lambda_{R}}\left\{u^{sc}(x;d)\frac{\partial \widetilde{w}^{in}(x;z)}{\partial \nu(x)}-\widetilde{w}^{in}(x;z)\frac{\partial u^{sc}(x;d)}{\partial \nu(x)}\right\}ds(x).
\en

Let $C_{R}$ be the domain enclosed by $\Lambda_{R}$ and $\{x_{3}=0\}$. Then $C_{R}$ may consist of several connected components and $\partial C_{R}\subset \Lambda_{R}\cup\{x_{3}=0\}$. By the definition of $C_{R}$ and \eqref{3.5}, we obtain
\be\nonumber
~~\int_{\Lambda_{R}}\left\{\widetilde{u}^{in}(x;d)\frac{\partial \widetilde{w}^{in}(x;z)}{\partial \nu(x)}-\widetilde{w}^{in}(x;z)\frac{\partial \widetilde{u}^{in}(x;d)}{\partial \nu(x)}\right\}ds(x)\\ \label{3.10}
=-\int_{\partial C_{R}}\left\{\widetilde{u}^{in}(x;d)\frac{\partial \widetilde{w} ^{in}(x;z)}{\partial \nu(x)}-\widetilde{w}^{in}(x;z)\frac{\partial \widetilde{u}^{in}(x;d)}{\partial \nu(x)}\right\}ds(x).
\en
Applying Green's formula of $\widetilde{u}^{in}$ and $\widetilde{w}^{in}$ to $C_{R}$, we obtain from (\ref{3.10}) that
\be\label{3.11}
\widetilde{u}^{in}(z;d)=\int_{\Lambda_{R}}\left\{\widetilde{u}^{in}(x;d)\frac{\partial \widetilde{w}^{in}(x;z)}{\partial \nu(x)}-\widetilde{w}^{in}(x;z)\frac{\partial \widetilde{u}^{in}(x;d)}{\partial \nu(x)}\right\}ds(x).
\en

Suppose $|y|>R$, $y\in\Omega$. As the proof of \eqref{3.7}, we get
\be\label{3.12}
w^{sc}(y;z)=\int_{S_{R}^{+}}\left\{w^{sc}(x;z)\frac{\partial \widetilde{w}^{in}(x;y)}{\partial \nu(x)}-\widetilde{w}^{in}(x;y)\frac{\partial w^{sc}(x;z)}{\partial \nu(x)}\right\}ds(x).
\en
Similar to \eqref{3.9}, it follows from \eqref{3.12} that
\be\label{3.13}
w^{sc}(y;z)+\widetilde{w}^{in}(z;y)=\int_{\Lambda_{R}}\left\{w^{sc}(x;z)\frac{\partial \widetilde{w}^{in}(x;y)}{\partial \nu(x)}-\widetilde{w}^{in}(x;y)\frac{\partial w^{sc}(x;z)}{\partial \nu(x)}\right\}ds(x).
\en
By the definition of $\widetilde{w}^{in}(x;y)$, there has the asymptotic behavior
\be\label{3.14}
\widetilde{w}^{in}(x;y)=\frac{1}{4\pi}\frac{e^{ik|y|}}{|y|}(e^{-ik\hat{y}\cdot x}-e^{-ik\hat{y}\cdot x'})+O(\frac{1}{|y|}),~~|y|\rightarrow\infty.
\en
Substituting the asymptotic behavior of $w^{sc}$ and \eqref{3.14} into \eqref{3.13} and taking $|y|\rightarrow\infty$, we get
$$4\pi w^{sc}_{\infty}(\hat{y};z)+\widetilde{u}^{in}(z;-\hat{y})=\int_{\Lambda_{R}}\left\{w^{sc}(x;z)\frac{\partial \widetilde{u}^{in}(x;-\hat{y})}{\partial \nu(x)}-\widetilde{u}^{in}(x;-\hat{y})\frac{\partial w^{sc}(x;z)}{\partial \nu(x)}\right\}ds(x).$$
Taking $\hat{y}=-d$, then the previous formula becomes
\be\label{3.15}
4\pi w^{sc}_{\infty}(-d;z)+\widetilde{u}^{in}(z;d)=\int_{\Lambda_{R}}\left\{w^{sc}(x;z)\frac{\partial \widetilde{u}^{in}(x;d)}{\partial \nu(x)}-\widetilde{u}^{in}(x;d)\frac{\partial w^{sc}(x;z)}{\partial \nu(x)}\right\}ds(x).
\en

On the other hand, adding \eqref{3.7} and \eqref{3.9} gives
\be\label{3.16}
u^{sc}(z;d)=\int_{\Lambda_{R}} u^{sc}(x;d)\frac{\partial w(x;z)}{\partial \nu(x)}-w(x;z)\frac{\partial u^{sc}(x;d)}{\partial \nu(x)}\}ds(x).
\en
Substracting \eqref{3.11} from \eqref{3.15}, we obtain
\be\label{3.17}
4\pi w^{sc}_{\infty}(-d;z)=\int_{\Lambda_{R}}\left\{w(x;z)\frac{\partial \widetilde{u}^{in}(x;d)}{\partial \nu}-\widetilde{u}^{in}(x;d)\frac{\partial w(x;z)}{\partial \nu}\right\}ds(x).
\en
Finally, substracting \eqref{3.16} from \eqref{3.17} and using $w=u=0~~\mbox{on}~\Gamma$, we have
$$4\pi w^{sc}_{\infty}(-d;z)-u^{sc}(z;d)=0,$$
which completes the proof Lemma \ref{mixed} in case $(\romannumeral3)$.

The other two cases can be treated analogously.

\end{proof}

Next, we state the symmetry of the scattered field for point source waves, that is, the scattered field of a point source wave remains unchanged when the incident direction and the observed direction are exchanged. Lemma \ref{sym} below extends the result of \cite[Theorem 3.17]{CK2013} from bounded obstacles to unbounded obstacles in a half space with local perturbations.
\begin{lem}\label{sym}
For scattering from locally perturbed sound-soft surfaces of a point source wave, we have the symmetry relation
$$w^{sc}(x;y)=w^{sc}(y;x),~~x,y\in \Omega.$$
\end{lem}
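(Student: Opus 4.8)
The plan is to follow the classical symmetry argument for bounded obstacles, cf.\ \cite[Theorem 3.17]{CK2013}, but to carry it out with the physical total field rather than with $w^{sc}$ directly, so that a source position lying in $\Omega\cap\Omega'$ causes no difficulty. For $z\in\Omega$ write $w(\cdot;z)=w^{in}(\cdot;z)+w^{re}(\cdot;z)+w^{sc}(\cdot;z)=\widetilde{w}^{in}(\cdot;z)+w^{sc}(\cdot;z)$. The first thing to check is that, although each of $w^{re}(\cdot;z)$ and $w^{sc}(\cdot;z)$ is singular at $z'$ when $z'\in\Omega$, these singularities cancel in the sum: near $z'$ one has $w^{re}(\cdot;z)=\mp\Phi(\cdot,z')$, while from $w^{sc}(\cdot;z)=\widetilde{w}^{sc}(\cdot;z)-w^{re}(\cdot;z)$ and the analyticity of $\widetilde{w}^{sc}$ in $\Omega$ one gets $w^{sc}(\cdot;z)=\widetilde{w}^{sc}(\cdot;z)\pm\Phi(\cdot,z')$. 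Consequently $w(\cdot;z)$ solves the Helmholtz equation in $\Omega\setminus\{z\}$ with the single point singularity $\Phi(\cdot,z)$ at $z$, satisfies the Dirichlet (resp.\ Neumann) boundary condition on $\Gamma$, and is radiating in $\Omega$, since $\widetilde{w}^{in}=\Phi(\cdot,z)\mp\Phi(\cdot,z')$ and $w^{sc}$ each satisfy \eqref{3.3}.

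Next I would fix $x,y\in\Omega$ with $x\neq y$, choose $R$ so large that $\Lambda_R$ and all of $x,y,x',y'$ lie in $B_R$, and for $R_1>R$ and small $\epsilon>0$ apply Green's second theorem to $w(\cdot;x)$ and $w(\cdot;y)$ on $D_{R_1}\setminus(\overline{B_\epsilon(x)}\cup\overline{B_\epsilon(y)})$. Both functions solve the Helmholtz equation there, so the volume term vanishes and
\[
0=\left(\int_{\Gamma\cap\overline{B_{R_1}}}+\int_{S_{R_1}^{+}}+\int_{\partial B_\epsilon(x)}+\int_{\partial B_\epsilon(y)}\right)\left\{w(\zeta;x)\frac{\partial w(\zeta;y)}{\partial\nu(\zeta)}-w(\zeta;y)\frac{\partial w(\zeta;x)}{\partial\nu(\zeta)}\right\}ds(\zeta),
\]
with $\nu$ the exterior unit normal of $D_{R_1}\setminus(\overline{B_\epsilon(x)}\cup\overline{B_\epsilon(y)})$. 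The integral over $\Gamma\cap\overline{B_{R_1}}$ vanishes by the boundary condition; the integral over $S_{R_1}^{+}$ tends to $0$ as $R_1\to\infty$ since $w(\cdot;x)$ and $w(\cdot;y)$ both satisfy the Sommerfeld radiation condition; and, writing $w(\zeta;x)=\Phi(\zeta,x)+g_x(\zeta)$, $w(\zeta;y)=\Phi(\zeta,y)+g_y(\zeta)$ with $g_x,g_y$ continuous near $x,y$, the standard local behaviour of the fundamental solution gives $\int_{\partial B_\epsilon(x)}\{\cdots\}\to -w(x;y)$ and $\int_{\partial B_\epsilon(y)}\{\cdots\}\to +w(y;x)$ as $\epsilon\to0$. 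Hence $w(x;y)=w(y;x)$.

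It remains to strip off the incident and reflected parts. Since $\Phi$ is symmetric in its arguments and $|x-y'|=|y-x'|$, we have $\Phi(x,y')=\Phi(y,x')$, and therefore $\widetilde{w}^{in}(x;y)=\Phi(x,y)\mp\Phi(x,y')=\Phi(y,x)\mp\Phi(y,x')=\widetilde{w}^{in}(y;x)$ in both boundary cases. Combined with $w(x;y)=w(y;x)$ this yields $w^{sc}(x;y)=w(x;y)-\widetilde{w}^{in}(x;y)=w(y;x)-\widetilde{w}^{in}(y;x)=w^{sc}(y;x)$, as claimed. The main obstacle is precisely the bookkeeping forced by the half-space geometry: one must verify that the $z'$-singularities of $w^{re}(\cdot;z)$ and $w^{sc}(\cdot;z)$ really do cancel, so that $w(\cdot;z)$ is regular away from $z$ and Green's theorem may be applied on $D_{R_1}$ with balls removed only around $x$ and $y$; the estimates on $S_{R_1}^{+}$ and on the small spheres are then routine. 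Alternatively, as in Remark \ref{remark 2.2}(i), one may first establish the identity for $x,y\in\Omega$ with $x',y'\notin\Omega$ — where $w^{re}$ and $w^{sc}$ are genuinely analytic in $\Omega$ and the computation above applies without any excised balls other than at $x$ and $y$ — and then extend to all $x,y\in\Omega$ by the analyticity of $w^{sc}(x;\cdot)$ and of $w^{sc}(\cdot;y)$.
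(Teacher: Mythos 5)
Your proof is correct, and it takes a genuinely different route from the paper's. The paper handles the symmetry by splitting the fields and running several Green identities for the pairs $(w^{sc}(\cdot;y),\widetilde{w}^{in}(\cdot;x))$, $(\widetilde{w}^{in}(\cdot;x),\widetilde{w}^{in}(\cdot;y))$, $(w^{sc}(\cdot;x),w^{sc}(\cdot;y))$ and $(w^{sc}(\cdot;x),\widetilde{w}^{in}(\cdot;y))$ over $D_{R_1}\setminus\overline{D_R}$ and $D_R$, explicitly bookkeeping the image-point contributions (terms such as $\widetilde{w}^{sc}(x';y)$, $\widetilde{w}^{sc}(y';x)$ in \eqref{3.19}--\eqref{3.25}) and organizing the argument into nine cases according to whether $x,y$ lie in $\Omega\setminus D_R$, $D_R\setminus\Omega'$ or $\Omega\cap\Omega'$; the image terms then cancel upon adding and subtracting the four identities. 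You instead observe up front what the paper only remarks in passing before case (ix): the $z'$-singularities of $w^{re}(\cdot;z)$ and $w^{sc}(\cdot;z)$ cancel, so the total field $w(\cdot;z)=\Phi(\cdot,z)+\widetilde{w}^{sc}(\cdot;z)$ has a single singularity at $z$, vanishes on $\Gamma$, and is radiating in the upper half space (being a sum of $\Phi(\cdot,z)$, $\mp\Phi(\cdot,z')$ and $w^{sc}$ outside $B_R$). This lets you run the classical bounded-obstacle symmetry argument of \cite[Theorem 3.17]{CK2013} verbatim with one application of Green's second theorem on $D_{R_1}$ with two small balls excised, and then strip off $\widetilde{w}^{in}$ using $\Phi(x,y')=\Phi(y,x')$. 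What your approach buys is a single Green identity, no case distinction and no image-term bookkeeping; what the paper's buys is that it never needs to argue about the total field's regularity at $z'$ explicitly, since the cancelation emerges from the algebra of the four identities. The only steps you share with the paper that deserve the same caveat as in the paper itself are the vanishing of the bilinear radiation integral over the half-sphere $S_{R_1}^{+}$ (which requires the radiation condition \eqref{3.3} together with the $O(1/|x|)$ decay of \eqref{3.4}, exactly as the paper invokes it to get \eqref{3.18}) and, in your alternative closing remark, the analytic dependence of $w^{sc}$ on the source point, which the paper likewise invokes in Remark \ref{remark 2.2}(i).
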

\begin{proof}
Since $x,y\in\Omega$, our proof can be divided into the following nine cases:\\
$(\romannumeral1)$ $x\in \Omega\backslash D_{R}$, $y\in \Omega\backslash D_{R}$;~~~~~~~~~~$(\romannumeral2)$ $x\in\Omega\backslash D_{R}$, $y\in D_{R}\backslash \Omega'$;\\
$(\romannumeral3)$ $x\in \Omega\backslash D_{R}$, $y\in \Omega \cap \Omega'$;~~~~~~~~$(\romannumeral4)$ $x\in D_{R}\backslash \Omega'$, $y\in \Omega\backslash D_{R}$;\\
$(\romannumeral5)$ $x\in D_{R}\backslash \Omega'$, $y\in D_{R}\backslash \Omega'$;~~~~~~~~$(\romannumeral6)$ $x\in D_{R}\backslash \Omega'$, $y\in \Omega \cap \Omega'$;\\
$(\romannumeral7)$ $x\in \Omega \cap \Omega'$, $y\in \Omega\backslash D_{R}$;~~~~~~~$(\romannumeral8)$ $x\in \Omega \cap \Omega'$, $y\in D_{R}\backslash \Omega'$;\\
$(\romannumeral9)$ $x\in \Omega \cap \Omega'$, $y\in \Omega \cap \Omega'$.

We only consider case $(\romannumeral9)$, while the remaining eight cases can be verified in the same manner. Note again that in the case $(\romannumeral9)$ the reflected and scattered fields are both singular in $\Omega \cap \Omega'$, but their sum turns out to be non-singular.

First of all, we choose $R_{1}>R$ such that $R_{1}>\max\{|x|,|y|\}$. Since $y\in \Omega \cap \Omega'$, the function $w^{sc}(x;y)$ is singular at $x=y'$. Applying the Green's second theorem of $w^{sc}(\cdot;y)$ and $\widetilde{w}^{in}(\cdot;x)$ to $D_{R_{1}}\backslash \overline{D_{R}}$, we get
$$0=(\int_{S_{R_{1}}^{+}}-\int_{S_{R}^{+}})\{w^{sc}(\cdot;y)\frac{\partial \widetilde{w}^{in}(\cdot;x)}{\partial \nu}-\widetilde{w}^{in}(\cdot;x)\frac{\partial w^{sc}(\cdot;y)}{\partial \nu}\}ds.$$
Letting $R_{1}\rightarrow\infty$ and making use of the radiation conditions of $w^{sc}(\cdot;y)$ and $\widetilde{w}^{in}(\cdot;x)$, we obtain
\be\label{3.18}
0=\int_{S_{R}^{+}}\{w^{sc}(\cdot;y)\frac{\partial \widetilde{w}^{in}(\cdot;x)}{\partial \nu}-\widetilde{w}^{in}(\cdot;x)\frac{\partial w^{sc}(\cdot;y)}{\partial \nu}\}ds.
\en
It is obvious that $w^{sc}(\cdot;y)$ and $\widetilde{w}^{in}(\cdot;x)$ are singular in $D_{R}$. As in the proof of \eqref{3.8}, it follows from \eqref{3.18} that
\be\label{3.19}
\widetilde{w}^{sc}(x;y)-\widetilde{w}^{sc}(x';y)=\int_{\Lambda_{R}}\{w^{sc}(\cdot;y)\frac{\partial \widetilde{w}^{in}(\cdot;x)}{\partial \nu}-\widetilde{w}^{in}(\cdot;x)\frac{\partial w^{sc}(\cdot;y)}{\partial \nu}\}ds.
\en
In $D_{R_{1}}\backslash \overline{D_{R}}$, similar to the proof of \eqref{3.18}, we obtain
\be\label{3.20}
0=\int_{S_{R}^{+}}\{\widetilde{w}^{in}(\cdot;y)\frac{\partial \widetilde{w}^{in}(\cdot;x)}{\partial \nu}-\widetilde{w}^{in}(\cdot;x)\frac{\partial \widetilde{w}^{in}(\cdot;y)}{\partial \nu}\}ds.
\en
In addition, $\widetilde{w}^{in}(\cdot;y)$ is singular in $D_{R}$ due to $y\in \Omega \cap \Omega'$. Using Green's formula of $\widetilde{\omega}^{in}(\cdot,x)$ and $\widetilde{\omega}^{in}(\cdot,y)$ in $D_{R}$ and combining with \eqref{3.20}, we get
\be\label{3.21}
0=\int_{\Lambda_{R}}\{\widetilde{w}^{in}(\cdot;y)\frac{\partial \widetilde{w}^{in}(\cdot;x)}{\partial \nu}-\widetilde{w}^{in}(\cdot;x)\frac{\partial \widetilde{w}^{in}(\cdot;y)}{\partial \nu}\}ds.
\en
Arguing the same as in \eqref{3.21}, we obtain
\be\label{3.22}
\widetilde{w}^{sc}(x';y)-\widetilde{w}^{sc}(y';x)=\int_{\Lambda_{R}}\{w^{sc}(\cdot;y)\frac{\partial w^{sc}(\cdot;x)}{\partial \nu}-w^{sc}(\cdot;x)\frac{\partial w^{sc}(\cdot;y)}{\partial \nu}\}ds.
\en
Analogously, applying Green's formula of $w^{sc}(\cdot;x)$ and $\widetilde{w}^{in}(\cdot;y)$ in $D_{R_{1}}\backslash \overline{D_{R}}$, we have
\be\label{3.23}
\widetilde{w}^{sc}(y;x)-\widetilde{w}^{sc}(y';x)=\int_{\Lambda_{R}}\{w^{sc}(\cdot;x)\frac{\partial \widetilde{w}^{in}(\cdot;y)}{\partial \nu}-\widetilde{w}^{in}(\cdot;y)\frac{\partial w^{sc}(\cdot;x)}{\partial \nu}\}ds.
\en

Adding \eqref{3.19} and \eqref{3.21}, we get
\be\label{3.24}
\widetilde{w}^{sc}(x;y)-\widetilde{w}^{sc}(x';y)=\int_{\Lambda_{R}}\{w(\cdot;y)\frac{\partial \widetilde{w}^{in}(\cdot;x)}{\partial \nu}-\widetilde{w}^{in}(\cdot;x)\frac{\partial w(\cdot;y)}{\partial \nu}\}ds.
\en
Subtracting \eqref{3.22} from \eqref{3.23}, we obtain
\be\label{3.25}
\widetilde{w}^{sc}(y;x)-\widetilde{w}^{sc}(x';y)=\int_{\Lambda_{R}}\{w^{sc}(\cdot;x)\frac{\partial w(\cdot;y)}{\partial \nu}-w(\cdot;y)\frac{\partial w^{sc}(\cdot;x)}{\partial \nu}\}ds.
\en
Finally, subtracting \eqref{3.24} from \eqref{3.25} and using $w=u=0$ on $\Gamma$, we have $w^{sc}(y;x)=w^{sc}(x;y)$.

\end{proof}
\begin{rem}\label{remark 2.2}
Lemma \ref{sym} implies that $w(x;y)=w(y;x)$, because $\widetilde{w}^{in}(x;y)=\widetilde{w}^{in}(y;x)$.  This gives the symmetry of the Green's function to locally perturbed rough surface scattering problems.
\end{rem}
The symmetry relation shown in Lemma \ref{sym} will be used in our uniqueness proof of Theorem \ref{alldirection} below. It is seen from Theorem \ref{alldirection} that an unbounded scattering interface with local perturbations can be uniquely determined by an infinite number of incident waves with all directions and a fixed wave number. 
We shall carry out the proof following the arguments of \cite{KK1993} but generalizes it to be applicable for  scattering problems in a half space.
\begin{thm}\label{alldirection}
Assume $\Gamma_{1}$ and $\Gamma_{2}$ are two sound-soft surfaces with local perturbations such that their far-field patterns $u^{sc}_{1,\infty}(\hat{x};d)$ and $u^{sc}_{2,\infty}(\hat{x};d)$ coincide for all directions $d\in \mathbb{S}^{2}_{-}$ and one fixed wave number. Then $\Gamma_{1}=\Gamma_{2}.$
\end{thm}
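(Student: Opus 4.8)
The plan is to mimic the Kirsch--Kress uniqueness argument (as in \cite{KK1993} and \cite[Theorem 5.6]{CK2013}), but with point sources placed only in the upper half space and with all far-field/mixed-reciprocity identities replaced by their half-space analogues established above. Assume for contradiction that $\Gamma_1\neq\Gamma_2$. Since the two far-field patterns agree for every $d\in\mathbb S^2_-$, Theorem \ref{farscat} gives $u^{sc}_1(\cdot;d)=u^{sc}_2(\cdot;d)$ in $\Omega_1\cap\Omega_2$ for all such $d$, hence also $\widetilde u^{sc}_1=\widetilde u^{sc}_2$ and $u_1=u_2$ there (the reflected and incident parts are identical by Snell's law). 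By the mixed reciprocity relation (Lemma \ref{mixed}) together with Remark \ref{remark 2.2}(ii), the far-field patterns of the point-source scattered fields $w^{sc}_{1,\infty}(\cdot;z)$ and $w^{sc}_{2,\infty}(\cdot;z)$ coincide for every source point $z\in\Omega_1\cap\Omega_2$; applying Theorem \ref{farscat} once more (in the version for point-source incidence, valid by Remark \ref{remark 2.1}) yields $w^{sc}_1(x;z)=w^{sc}_2(x;z)$, and therefore $w_1(x;z)=w_2(x;z)$, for all $x,z$ in the common domain where both are defined.

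Next I would use the symmetry relation of Lemma \ref{sym} to interchange source and observation points. Because $\Gamma_1\neq\Gamma_2$, there is a point $x^*$, which we may take on $\Gamma_1$ (say) and not on $\Gamma_2$, lying on the boundary of the connected component of $\Omega_1\cap\Omega_2$ adjacent to it, with a small neighborhood of $x^*$ contained in $\Omega_2$. Pick a sequence $z_n\in\Omega_1\cap\Omega_2$ converging to $x^*$ along the normal direction, staying away from $\Gamma_2$. On the one hand, since $w_2(\cdot;z_n)$ depends continuously on $z_n$ up to $x^*$ (which lies in the open set $\Omega_2$ away from $\Gamma_2$), the values $w_2(x^*;z_n)=w_1(x^*;z_n)$ stay bounded as $n\to\infty$ — indeed $w_1(\cdot;z_n)$ satisfies the sound-soft boundary condition $w_1(x^*;z_n)=0$ on $\Gamma_1$ in the limit, or more precisely $w_1(x;z_n)\to 0$ as $x\to x^*$ along $\Gamma_1$. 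On the other hand, using the symmetry $w_1(x^*;z_n)=w_1(z_n;x^*)$ and the fact that the incident part $w^{in}_1(z_n;x^*)=\Phi(z_n;x^*)$ blows up like $|z_n-x^*|^{-1}$ as $z_n\to x^*$, while the smooth part $\widetilde w^{sc}_1(z_n;x^*)$ stays bounded near $x^*\in\Omega_1$, one gets that $w_1(z_n;x^*)\to\infty$. This contradiction forces $\Gamma_1=\Gamma_2$.

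To make the normal-approach argument precise I would, as in \cite{KK1993}, take $x^*$ to be a point where $\Gamma_1$ and $\Gamma_2$ genuinely separate, choose the sequence $z_n=x^*+\frac{1}{n}\nu(x^*)$ (with $\nu$ the unit normal into $\Omega_1$), verify that for all large $n$ one has $z_n\in\Omega_1\cap\Omega_2$ and $\mathrm{dist}(z_n,\Gamma_2)\geq c>0$, and then compare the two one-sided limits: from the $\Gamma_1$-side, continuity of $w_1(x;\cdot)$ up to $\Gamma_1$ together with the Dirichlet condition (and symmetry) gives a finite limit; from the $\Gamma_2$-side, the singularity of $\Phi$ at coincident arguments gives an infinite limit. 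A technical point is that $w^{sc}_1(\cdot;z)$ itself is singular at the mirror point $z'$ when $z\in\Omega_1\cap\Omega_1'$, so one must check the mirror points $z_n'$ stay bounded away from the relevant evaluation points — this is automatic since $z_n'$ lies in the lower half space while $x^*$ lies on or above $\{x_3=0\}$ (or, if $x^*$ has $x_3=0$, one argues on $\{x_3>0\}$ only, as in the cavity literature).

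The main obstacle I expect is the bookkeeping around the point-source mirror singularities and the domain of validity: one must be careful that Theorem \ref{farscat} and Lemma \ref{mixed} are applied only where $\Omega_1\cap\Omega_2$ is connected and contains the relevant source and observation points, and that "$\Gamma_1\ne\Gamma_2$ implies the existence of a separating boundary point $x^*$ with a half-neighborhood in the other domain" is justified using the graph structure of the surfaces. Handling the case where the separating point lies on the flat part $\{x_3=0\}$ versus in its interior requires the reflection-principle extension (as in the proof of Theorem \ref{farscat}) so that $w^{sc}_j$ and $w_j$ are genuine Helmholtz solutions across $\{x_3=0\}$ away from $B^+_R$; modulo that, the contradiction argument goes through exactly as in the bounded-obstacle case.
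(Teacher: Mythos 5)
Your overall plan coincides with the paper's: Theorem \ref{farscat} for plane waves, Lemma \ref{mixed} plus a point-source version of Theorem \ref{farscat} to obtain $w^{sc}_1(x;z)=w^{sc}_2(x;z)$ in $G:=\Omega_1\cap\Omega_2$, and then a Kirsch--Kress blow-up at a separating point $x^*\in\Lambda_R^{(1)}\setminus\Gamma_2$ with $z_n=x^*+\frac{1}{n}\nu(x^*)$. However, the decisive blow-up step is carried out with the roles of $\Gamma_1$ and $\Gamma_2$ interchanged, and as written it fails. For the unbounded limit you write $w_1(x^*;z_n)=w_1(z_n;x^*)$ by Lemma \ref{sym} and claim that the smooth part $\widetilde{w}^{sc}_1(z_n;x^*)$ stays bounded ``near $x^*\in\Omega_1$''. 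But $x^*$ lies on $\Gamma_1$, not in $\Omega_1$: the symmetry relation is only established for both arguments in $\Omega$, so it cannot be invoked with source point $x^*\in\Gamma_1$; and for $\Gamma_1$ a point source at (or tending to) a boundary point is exactly the situation in which $\widetilde{w}^{sc}_1$ blows up --- its divergence, needed to cancel $\Phi$ on $\Gamma_1$, is the mechanism the proof must exploit, not a quantity one may declare bounded. Notice also that your second paragraph derives both $w_1(x^*;z_n)=0$ (Dirichlet condition) and $w_1(x^*;z_n)\to\infty$ using properties of $\Gamma_1$ alone; if that reasoning were sound it would produce a contradiction for a single surface, so it cannot be the intended argument. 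Likewise, the ``bounded side'' as you justify it is false: the total field $w_2(x^*;z_n)=\Phi(x^*,z_n)+\widetilde{w}^{sc}_2(x^*;z_n)$ does not depend continuously on $z_n$ up to $x^*$, since the incident term blows up as $z_n\to x^*$.

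The correct arrangement, which is the paper's, is the mirror image of yours: the blow-up comes from $\Gamma_1$ with no symmetry at all, via the boundary condition at $x^*\in\Gamma_1$, which gives $w^{sc}_1(x^*;z_n)=-\widetilde{w}^{in}(x^*;z_n)\to\infty$; the boundedness comes from $\Gamma_2$, via Lemma \ref{sym} applied to $w^{sc}_2$ with both points $x^*,z_n$ lying in $\Omega_2$ (legitimate, since $x^*$ is interior to $\Omega_2$) together with well-posedness of the $\Gamma_2$-problem with source at $x^*$, so that $w^{sc}_2(x^*;z_n)=w^{sc}_2(z_n;x^*)\to w^{sc}_2(x^*;x^*)<\infty$; the contradiction is then with the identity $w^{sc}_1(x^*;z_n)=w^{sc}_2(x^*;z_n)$ inherited from $G$. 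Your alternative total-field phrasing ($0=w_1(x^*;z_n)=w_2(x^*;z_n)$ while $w_2(x^*;z_n)\to\infty$) can be repaired, but only by justifying the boundedness of $\widetilde{w}^{sc}_2(x^*;z_n)$ through precisely this $\Gamma_2$-symmetry-plus-well-posedness argument (and by checking the reflected term $-\Phi(x^*,z_n')$ stays bounded, e.g.\ choosing $x^*$ with $x_3^*\neq 0$, swapping the roles of the two surfaces if necessary); as it stands, your proposal never uses the well-posedness of the scattering problem for $\Gamma_2$, which is the ingredient on which the whole contradiction hinges.
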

\begin{proof}
We assume that $\Gamma_{j}$ $(j=1,2)$ is a surface with the local perturbation $\Lambda_{R}^{(j)}$. Let $u^{sc}_{j,\infty}(\hat{x};d)$ be the far-field pattern of the scattered field $u_{j}^{sc}(x;d)$, and let $w^{sc}_{j}(x;z)$ be the scattered field corresponding to the incoming point source, respectively. Supposing that
$$u^{sc}_{1,\infty}(\hat{x};d)=u^{sc}_{2,\infty}(\hat{x};d), ~~\mbox{for all}~\hat{x}\in \mathbb{S}^{2}_{+},~d\in \mathbb{S}^{2}_{-},$$
we need to prove that $\Gamma_{1}=\Gamma_{2}$.

By Theorem \ref{farscat}, we have $u^{sc}_{1}(x;d)=u^{sc}_{2}(x;d)$ for all $x\in G:=\Omega_{1}\cap \Omega_{2}$, $d\in \mathbb{S}^{2}_{-}$. Applying the mixed reciprocity relation (see Lemma \ref{mixed}), we obtain
$$w^{sc}_{1,\infty}(\hat{y};x)=w^{sc}_{2,\infty}(\hat{y};x),$$
for all $\hat{y}\in \mathbb{S}^{2}_{+},~x\in G$. Similar to the proof of Theorem \ref{farscat}, for point source wave, we can get
$$w^{sc}_{1}(y;x)=w^{sc}_{2}(y;x)~~\mbox{for all}~ x,y\in G.$$

It is sufficient to prove that $\Lambda_{R}^{(1)}=\Lambda_{R}^{(2)}$. Assume on the contrary that $\Lambda_{R}^{(1)}\neq \Lambda_{R}^{(2)}$. We can always find a point $x^{\ast} \in \partial G$ such that $x^{\ast}\in \Lambda_{R}^{(1)}$ but $x^{\ast} \notin \Lambda_{R}^{(2)}$ (see Figure \ref{fig:2} below).
\begin{figure}[ht]
\centering
\includegraphics[scale=0.6]{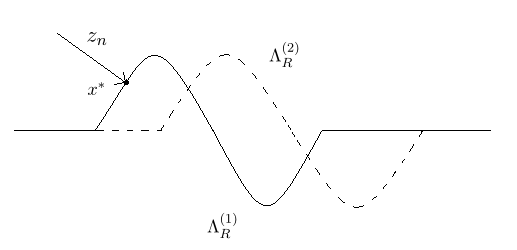}
\caption{The solid line is $\Lambda_{R}^{(1)}$, the dotted line is $\Lambda_{R}^{(2)}$.}
\label{fig:2}
\end{figure}
Define
$$z_{n}:=x^{\ast}+\frac{1}{n} \nu (x^{\ast}),  ~n=1,2,\cdots$$
which lie in $G$ for sufficiently large $n$. On the one hand, since $x^{*}\in \Omega_{2}$, we know that $w^{sc}_{2}(\cdot;x^{\ast})$ is continuously differentiable in a neighborhood of $x^{\ast} \notin \Gamma_{2}$ from the Green's formula of $w^{sc}_{2}$ in $\Omega_{2}$. Owing to the symmetry relation of $w_{2}^{sc}(x^{\ast};\cdot)$ and the well-posedness of the direct scattering problem with the Dirichlet boundary condition on $\Gamma_{2}$, we have
$$\lim\limits_{n\rightarrow\infty}w^{sc}_{2}(x^{\ast}; z_{n})=\lim\limits_{n\rightarrow\infty}w^{sc}_{2}(z_{n};x^{\ast})=w^{sc}_{2}(x^{\ast}; x^{\ast})<\infty.$$
On the other hand, using the Dirichlet boundary condition on $\Gamma_{1}$, we get
$$\lim\limits_{n\rightarrow\infty}w^{sc}_{1}(x^{\ast};z_{n})=-\lim\limits_{n\rightarrow\infty}\widetilde{w}^{in}_{1}(x^{\ast};z_{n})=\infty.$$
This contradicts the relation
$$w^{sc}_{1}(x^{\ast}; z_{n})=w^{sc}_{2}(x^{\ast}; z_{n})~~\mbox{for all sufficiently large}~n.$$
Therefore, we get $\Lambda_{R}^{(1)}=\Lambda_{R}^{(2)}$.

\end{proof}
\subsubsection{Uniqueness with a single incident wave}\label{sec:unising}
In this subsection, we keep the symbols used in the previous section and suppose that $f$ is a piecewise linear function. In this case, $\Lambda_{R}$ is called a local perturbation of polyhedral type, as shown in Figure \ref{fig:3}. We shall prove in Theorems \ref{polyplane} and \ref{polypoint} that a polyhedral surface can be uniquely determined by an incident plane wave or a single point source wave.
\begin{figure}[ht]
\centering
\includegraphics[scale=0.4]{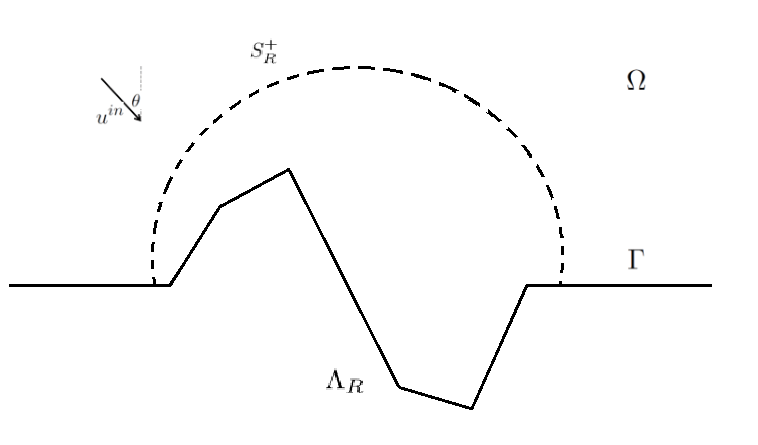}
\caption{Illustration of a locally perturbed polyhedral surface.}
\label{fig:3}
\end{figure}
\begin{thm}\label{polyplane}
Under the Dirichlet boundary condition, a polyhedral surface can be uniquely determined by the far-field pattern of an incident plane wave.
\end{thm}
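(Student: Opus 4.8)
The plan is to argue by contradiction using the reflection principle for the Helmholtz equation together with the explicit form of the reference field $\widetilde u^{in}$, following the path arguments of \cite{CY2003,AR2005,LZ2006,EY2006} but adapted to the half‑space geometry.

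First I would reduce everything to an identity of fields. If $\Gamma_1,\Gamma_2$ are two sound‑soft polyhedral surfaces producing the same far‑field pattern for the incident plane wave $u^{in}(\cdot;d)$, then Theorem \ref{farscat} gives $u^{sc}_1(x;d)=u^{sc}_2(x;d)$ in $\Omega_1\cap\Omega_2$, and since $\widetilde u^{in}$ depends only on $d$ and on the flat ground, $u:=u_1=u_2$ in $G:=\Omega_1\cap\Omega_2=\{x_3>\max(f_1,f_2)(\tilde x)\}$. Here $g:=\max(f_1,f_2)$ is again piecewise linear with $g=0$ for $|\tilde x|>R$, so $\partial G$ is polyhedral; on each cell of $\partial G$ the field agrees with $u_1$ or with $u_2$, hence $u=0$ on $\partial G$. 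Moreover $u=\widetilde u^{in}+u^{sc}$ with $u^{sc}$ radiating, where $\widetilde u^{in}(x)=e^{ik(\alpha x_1+\beta x_2)}\bigl(e^{-ik\gamma x_3}-e^{ik\gamma x_3}\bigr)$ vanishes precisely on the horizontal planes $\Pi_m:=\{x_3=m\pi/(k\gamma)\}$, $m\in\Z$, and is odd with respect to each of them. Note $u\not\equiv 0$: otherwise $u^{sc}=-\widetilde u^{in}$ would be a radiating solution equal to a nonzero superposition of plane waves, which is impossible.

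Now assume $\Gamma_1\neq\Gamma_2$. Since both surfaces are polyhedral graphs agreeing outside $|\tilde x|=R$, there is a cell $F$ of $\partial G$, lying on a plane $\Pi$, with (say) $F\subset\Gamma_1$ and a relatively open subset $F'\subset F$ contained in the interior of $\Omega_2$; if no such cell existed, then $f_1=f_2$ everywhere and $\Gamma_1=\Gamma_2$. Because $F'\subset\Omega_2$, $u_2$ solves the Helmholtz equation in a full two‑sided neighbourhood of $F'$ and vanishes on $F'$ (as $u=u_2$ in $G$). Reflecting the one‑sided restriction of $u_2$ across $\Pi$ by the reflection principle (see \cite{DL1955}) and comparing with $u_2$ via the unique continuation principle forces $u_2$ to be odd about $\Pi$, i.e. $u_2(\sigma_{\Pi}x)=-u_2(x)$ on the connected region where both $x$ and its mirror point lie in $\Omega_2$. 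Writing $u_2=\widetilde u^{in}+u^{sc}_2$, observing that $\widetilde u^{in}\circ\sigma_{\Pi}$ is again a sum of plane waves while $u^{sc}_2\circ\sigma_{\Pi}$ is still an outgoing solution, and using the linear independence of plane waves with distinct directions against radiating fields, one separates this into $\widetilde u^{in}\circ\sigma_{\Pi}=-\widetilde u^{in}$ and $u^{sc}_2\circ\sigma_{\Pi}=-u^{sc}_2$ (identities of real‑analytic functions, hence valid on all of $\R^3$). A direct computation with the explicit $\widetilde u^{in}$ shows that the only planes about which it is odd are the $\Pi_m$, so $\Pi=\Pi_{m_0}$ for some $m_0\in\Z$. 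Iterating, the oddness of $u_2$ about $\Pi_{m_0}$ converts the vanishing of $u_2$ on $\Gamma_2$ into vanishing on reflected cells, about whose planes one reflects again, and so on. Since $f_1,f_2$ are piecewise linear there are only finitely many cell‑planes, so this process generates a sequence of distinct cells on which $u\equiv0$ that escapes to infinity inside $G$; but there $u^{sc}\to0$ while $\widetilde u^{in}$ does not vanish on any unbounded two‑dimensional set other than the $\Pi_m$, a contradiction. Hence $\Gamma_1=\Gamma_2$.

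The main obstacle is exactly this iterated reflection/path step: one must show the chain of reflected cells is well defined across edges and vertices, control the reflected domains (some of which cross into $\{x_3<0\}$ — this is handled because $u$ also continues across the flat ground $\{x_3=0,\ |\tilde x|>R\}$ as an odd function of $x_3$, precisely as in the proof of Theorem \ref{farscat}, and $\widetilde u^{in}$ is itself odd in $x_3$), and carry out the finiteness argument guaranteeing that the process cannot continue indefinitely. The remaining ingredients — the reduction via Theorem \ref{farscat}, the single reflection, and the algebraic identification of $\Pi$ with some $\Pi_m$ — are routine.
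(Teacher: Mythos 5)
Your reduction (Theorem \ref{farscat} giving $u_1=u_2$ in $G=\Omega_1\cap\Omega_2$, a cell $F'\subset\Gamma_1\cap\Omega_2$ on which $u_2=0$, and the local reflection principle making $u_2$ odd about the plane $\Pi$ of $F'$) matches the paper's starting point, but the two steps you then rely on contain genuine gaps. First, the ``separation'' of the oddness identity into $\widetilde u^{in}\circ\sigma_\Pi=-\widetilde u^{in}$ and $u_2^{sc}\circ\sigma_\Pi=-u_2^{sc}$ is not routine: the identity $u_2\circ\sigma_\Pi=-u_2$ is only known on the connected component of $\Omega_2\cap\sigma_\Pi(\Omega_2)$ containing $F'$, and to play a finite sum of plane waves against decaying scattered fields you must know that this component reaches infinity in an open set of directions along which both $u_2^{sc}$ and $u_2^{sc}\circ\sigma_\Pi$ decay. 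That unboundedness is precisely what the reflection/path machinery is needed for (the cell may sit in a ``pit'' and its symmetric neighbourhood may be enclosed by $\Gamma_2$ and its mirror image), so your claim that the identification $\Pi=\Pi_{m}$ is ``routine'' while only the iteration is problematic is circular; also the resulting identities cannot hold ``on all of $\R^3$'', since $u_2^{sc}$ is only defined in (an extension of) $\Omega_2$.

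Second, even granting $\Pi=\Pi_{m_0}$, your closing iteration does not deliver the contradiction as described. Oddness about the horizontal plane $\{x_3=m_0\pi/(k\gamma)\}$ maps the flat part of $\Gamma_2$ onto $\{x_3=2m_0\pi/(k\gamma)\}=\Pi_{2m_0}$, on which $\widetilde u^{in}$ also vanishes, so no decay-versus-oscillation contradiction appears there; the only useful new nodal cells are the tilted reflected ones, and these help only if their maximal extensions in $\Omega_2$ are unbounded in the $+x_3$ direction --- again the deferred path argument --- while ``finitely many cell-planes'' does not by itself produce nodal cells escaping to infinity. The paper's proof goes the opposite way: it uses the (cited) reflection and path arguments to produce a nodal plane $\widetilde\Pi$ that is unbounded in the positive $x_3$ direction and \emph{non-horizontal} (a single extra reflection removes the horizontal case), and then concludes immediately because $u_2^{sc}\to0$ along $\widetilde\Pi$ by the radiation condition while $u^{in}+u^{re}=-2i e^{ik(\alpha x_1+\beta x_2)}\sin(k\gamma x_3)$ does not tend to zero along any such tilted plane. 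So the decisive geometric step is missing from your argument, and the algebraic/iterative steps you substitute for the paper's final contradiction either presuppose that step or fail as stated.
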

\begin{proof}
Denoting $\Gamma_{1}$ and $\Gamma_{2}$ two surfaces with local perturbations of polyhedral type (see Figure \ref{fig:4}).
\begin{figure}[ht]
\centering
\includegraphics[scale=0.6]{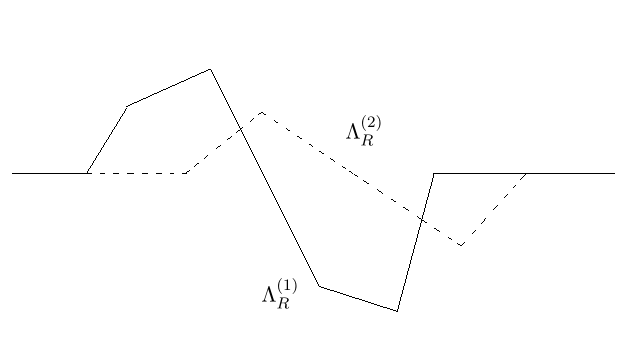}
\caption{The solid line is $\Lambda_{R}^{(1)}$, the dotted line is $\Lambda_{R}^{(2)}$.}
\label{fig:4}
\end{figure}
By our assumptions, the far-field patterns of $\Gamma_{1}$ and $\Gamma_{2}$ are identical, i.e.,
$$u_{1,\infty}^{sc}(\hat{x};d)=u_{2,\infty}^{sc}(\hat{x};d),~~\hat{x}\in \mathbb{S}^{2}_{+},~d\in \mathbb{S}^{2}_{-}.$$
It follows from Theorem \ref{farscat} that
\be\label{3.26}
u^{sc}_{1}(x;d)=u^{sc}_{2}(x;d),~~x\in \Omega_{1}\cap \Omega_{2},~d\in \mathbb{S}^{2}_{-},
\en
implying that
\be\label{3.27}
u_{1}(x;d)=u_{2}(x;d),~~x\in \Omega_{1}\cap\Omega_{2},~d\in \mathbb{S}^{2}_{-}.
\en

Now we assume that $\Gamma_{1}\neq\Gamma_{2}$. Without loss of generality, we assume that $S:=(\Gamma_{1}\backslash\Gamma_{2})\cap \partial(\Omega_{1}\cap \Omega_{2})\neq \emptyset$. By \eqref{3.27}, we get
$$u_{1}(x;d)=u_{2}(x;d)~~\mbox{on}~S.$$
Recalling that $u_{1}(x;d)=0$ on $S$, we obtain
\be\label{3.28}
u_{2}(x;d)=0~~\mbox{on}~S.
\en

First, we define the nodal set of $u_2$. A nodal set $\Sigma$ consists of flat surfaces in $\Omega_{2}$ on which $u_{2}$ vanishes. Each flat surface of $\Sigma$ is called a cell. Obviously, a cell may be bounded or unbounded. By \eqref{3.28}, there exists a bounded cell $\Pi\subset S$. Denote by $\widetilde{\Pi}$ the maximum extension of $\Pi$ in $\Omega_{2}$. Since $u_{2}$ is analytic in $\Omega_{2}$, $\widetilde{\Pi}$ also belongs to $\Sigma$. Without loss of generality, we may assume that $\widetilde{\Pi}$ is unbounded in the positive $x_{3}$ direction. In fact, if otherwise, one can always apply the reflection and path arguments to find a new cell which is unbounded in the positive $x_{3}$ direction. We note that this step has been extensively studied in the literature for both bounded and unbounded obstacles (see e.g.,\cite{AR2005, EY2006, Hu2012, HL2014, LZ2006}) and we thus omit the details for simplicity. Readers can be referred to Section \ref{elec} for the path and reflection arguments in electromagnetic scattering.

In view of the radiation condition of $u^{sc}_{2}$, we have
\be\label{3.29}
\lim\limits_{|x| \rightarrow\infty}|u_{2}^{sc}(x;d)|=0,~~x\in\widetilde{\Pi}.
\en
Without loss of generality, we may suppose that
\be\label{3.30}
\widetilde{\Pi}:=\{x_{3}=ax_{1}+bx_{2}+c,~ a,b,c\in\mathbb{R}, a^{2}+b^{2}\neq0\}\cap\{x_{3}\geq L\},
\en
where $L\in\mathbb{R}$ is some positive number. In fact, if $a^{2}+b^{2}=0$ in \eqref{3.30}, one can reflect $\Gamma_{2}$ with respect to the flat surface $\{x_{3}=c\}$. Consequently, by reflection principle for the Helmholtz equation, one can find another unbounded flat surface of the form $\widetilde{\Pi}$.

For $x\in\widetilde{\Pi}$, it holds that
\be\label{3.31}
u^{in}(x;d)+u^{re}(x;d)=e^{ik(\alpha x_{1}+\beta x_{2})}[e^{-ik\gamma x_3}-e^{ik\gamma x_3}],
\en
where $\gamma\in(0,1]$. In view of the definition of $\widetilde{\Pi}$ and \eqref{3.31}, one can find that
\be\label{3.32}
\lim\limits_{|x|\rightarrow\infty} |u_{2}^{sc}(x;d)|=\lim\limits_{|x|\rightarrow \infty}|-u^{in}_{2}(x;d)-u^{re}_{2}(x;d)|\neq0,~~x\in\widetilde{\Pi}.
\en
This is a contradiction to \eqref{3.29}. Therefore $\Gamma_{1}=\Gamma_{2}$.

\end{proof}
\begin{thm}\label{polypoint}
Under the Dirichlet boundary condition, a polyhedral surface can be uniquely determined by the far-field pattern of one point source wave.
\end{thm}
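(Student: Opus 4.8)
The plan is to follow the proof of Theorem~\ref{polyplane} with the incident plane wave replaced by the point source $w^{in}(\cdot;z)=\Phi(\cdot;z)$ for a fixed source point $z\in\Omega_1\cap\Omega_2$, but with one essential change at the final step: since $\Phi(\cdot;z)$ decays like $|x|^{-1}$ at infinity, the contradiction can no longer come from the non-decay of the incident field, and must instead come from the singularity of $\Phi(\cdot;z)$, propagated by reflection.

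First I would reduce to the two-surface setting. Let $\Gamma_1,\Gamma_2$ be two locally perturbed polyhedral sound-soft surfaces with $w^{sc}_{1,\infty}(\hat x;z)=w^{sc}_{2,\infty}(\hat x;z)$ for all $\hat x\in\mathbb{S}^{2}_{+}$. Exactly as in the proof of Theorem~\ref{alldirection}, where the point-source analogue of Theorem~\ref{farscat} was established by extending the scattered field across $\{x_3=0\}$ via the reflection principle for the Helmholtz equation (\cite{DL1955}) and then applying Rellich's lemma together with unique continuation, one obtains $w^{sc}_1(x;z)=w^{sc}_2(x;z)$, hence $w_1(x;z)=w_2(x;z)$, for all $x\in\Omega_1\cap\Omega_2$. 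Assume $\Gamma_1\neq\Gamma_2$. Then, say, $S:=(\Gamma_1\setminus\Gamma_2)\cap\partial(\Omega_1\cap\Omega_2)\neq\emptyset$, and since $w_2=w_1=0$ on $S$, the function $w_2(\cdot;z)$, which is analytic in $\Omega_2\setminus\{z\}$, vanishes on the flat piece $S\subset\Omega_2$.

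Next, as in Theorem~\ref{polyplane} and the reflection and path arguments of \cite{AR2005,EY2006,HL2014,LZ2006} (see also Section~\ref{elec}), I would propagate the nodal set of $w_2(\cdot;z)$ to obtain a maximal flat cell $\widetilde\Pi\subset\Omega_2$ with $w_2(\cdot;z)\equiv0$ on $\widetilde\Pi$ that is unbounded in the positive $x_3$-direction; writing $P=\{x_3=ax_1+bx_2+c\}$ for the plane of $\widetilde\Pi$, a preliminary reflection of $\Gamma_2$ arranges $a^2+b^2\neq0$, and we may assume $z\notin P$. Let $\rho_P$ denote the reflection across $P$. Because $w_2(\cdot;z)=0$ on the relatively open piece $\widetilde\Pi\subset P$, the reflection principle shows that $x\mapsto w_2(x;z)+w_2(\rho_P(x);z)$ solves the Helmholtz equation with vanishing Cauchy data on $\widetilde\Pi$, so by unique continuation $w_2(x;z)=-w_2(\rho_P(x);z)$ on the connected component, adjacent to $\widetilde\Pi$, of the open set $\{x\in\Omega_2:\rho_P(x)\in\Omega_2\}$. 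Writing $w_2=\Phi(\cdot;z)+\widetilde w^{sc}_2$ with $\widetilde w^{sc}_2$ analytic in $\Omega_2$ and using $\Phi(\rho_P(x);z)=\Phi(x;\rho_P(z))$, this forces the continued field $w_2(\cdot;z)$ to have a non-removable singularity at the image source $z^{\ast}:=\rho_P(z)$. But since $\widetilde\Pi$ is unbounded towards $+x_3$, the fixed point $z$ lies below $P$, so $z^{\ast}$ lies above $P$ at arbitrarily large height; hence $z^{\ast}\in\Omega_2$ and $z^{\ast}\notin\{z,z'\}$, and a singularity of $w_2(\cdot;z)$ at a point of $\Omega_2\setminus\{z\}$ contradicts its analyticity there. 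Therefore $\Gamma_1=\Gamma_2$.

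The step I expect to be the main obstacle is verifying that the reflected identity $w_2(x;z)=-w_2(\rho_P(x);z)$ indeed extends to a punctured neighbourhood of $z^{\ast}$ inside $\Omega_2$ — equivalently, that $\widetilde\Pi$ and $z^{\ast}$ can be joined by a path along which $x$ and $\rho_P(x)$ both stay in $\Omega_2$ and avoid $z$ — together with the bookkeeping that excludes the degenerate coincidences $z^{\ast}=z$ and $z^{\ast}=z'$; in general this requires iterating the reflection across several cells and controlling the resulting sequence of image sources. This is precisely the path/reflection analysis carried out for bounded polyhedral obstacles in \cite{HL2014} and, in the Maxwell setting, in Section~\ref{elec}; the locally perturbed half-space geometry does not alter its structure, and I would invoke those arguments for the remaining details.
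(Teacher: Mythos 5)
Your overall route is the same as the paper's: reduce the far-field identity to $w_1=w_2$ in $\Omega_1\cap\Omega_2$ via the odd extension across $\{x_3=0\}$ and Rellich's lemma (the point-source analogue of Theorem \ref{farscat}), extract a flat nodal cell from $S$, apply the reflection principle for the Helmholtz equation, and derive a contradiction from the blow-up of the field at the mirrored source point, with the reflection-and-path machinery of \cite{Hu2012,HL2014} supplying the key geometric step. That final contradiction (boundedness of $w_2(\cdot;z)$ near $z^{\ast}$ versus the singularity of $-w_2(\cdot;z^{\ast})$ there) is exactly the paper's Step.

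The one place where your mainline argument would fail is the detour through the construction of Theorem \ref{polyplane}: you take $\widetilde\Pi$ unbounded in the positive $x_3$-direction and deduce that ``$z$ lies below $P$, so $z^{\ast}=\rho_P(z)$ lies above $P$ at arbitrarily large height; hence $z^{\ast}\in\Omega_2$.'' None of these implications holds: the fixed source $z$ may lie on either side of the tilted plane $P$; its mirror image is a single fixed point, not a point ``at arbitrarily large height''; and lying above the tilted plane $P$ does not place a point in $\Omega_2$ --- for a steeply tilted $P$ the mirror point can even land below the ground plane, hence outside $\overline{\Omega}_2$. The unboundedness of $\widetilde\Pi$ in the $+x_3$ direction is the right tool for plane-wave incidence (where the contradiction comes from non-decay of $u^{in}+u^{re}$ along $\widetilde\Pi$), but it buys nothing for a point source, whose incident field decays. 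The paper instead formulates the correct requirement directly: find a cell whose maximal extension $\widetilde\Pi$ is such that the reflection of the source across $\widetilde\Pi$ lies in $\Omega_2$ (and can be reached by a symmetric domain contained in $\Omega_2$), and obtains such a cell by iterating the reflection and path arguments of \cite{Hu2012,HL2014} (spelled out for Maxwell's equations in Section \ref{elec}). Your closing paragraph, which falls back on precisely that analysis, is the correct argument; the unbounded-cell step should simply be dropped rather than patched.
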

\begin{proof}
We assume that $\Gamma_{1}$ and $\Gamma_{2}$ are two surfaces with local perturbations of polyhedral type. Let $y\in\Omega_{1}\cap\Omega_{2}$ be fixed and suppose that $\Gamma_{1}$ and $\Gamma_{2}$ have the same far field pattern, i.e.,
$$w_{1,\infty}^{sc}(\hat{x};y)=w_{2,\infty}^{sc}(\hat{x};y),~~\hat{x}\in \mathbb{S}^{2}_{+},~y\in\Omega_{1}\cap\Omega_{2}.$$
Recalling Theorem \ref{farscat}, we get
\be\label{3.33}
w_{1}^{sc}(x;y)=w_{2}^{sc}(x;y),~w_{1}(x;y)=w_{2}(x;y),~x\in\Omega_{1}\cap\Omega_{2}.
\en

Assuming $\Gamma_{1}\neq\Gamma_{2}$, we shall prove Theorem \ref{polypoint} by deriving a contradiction. We assume that $S:=(\Gamma_{1}\backslash\Gamma_{2})\cap\partial(\Omega_{1}\cap\Omega_{2})\neq\emptyset$. Due to the regularity and \eqref{3.33}, we obtain
$$w_{1}(x;y)=w_{2}(x;y)~~\mbox{on}~S.$$
Combining with the boundary conditions $w_{1}(x;y)=0$ on $S$, we have
\be\label{3.34}
w_{2}(x;y)=0~~\mbox{on}~S.
\en
Here, the nodal set is defined by flat surfaces (cells) in $\Omega_{2}$ on which $w_{2}$ vanishes. Our purpose is to find a bounded cell $\Pi$ such that its maximum extension $\widetilde{\Pi}$ in $\Omega_{2}$ fulfills the following condition: the reflection of the point source $y$ with respect to
 $\widetilde{\Pi}$ still lies in $\Omega_2$.
In fact, by \eqref{3.34} there exists a bounded cell $\Pi_{1}\subset S$. Let $\widetilde{\Pi}_{1}$ be the maximum extension of $\Pi_{1}$ in $\Omega_{2}$. Since $w_{2}(x;y)$ is real analytic in $\Omega_{2}\backslash\{y\}$, it follows that
\be\label{3.35}
w_{2}(x;y)=0~~\mbox{on}~\widetilde{\Pi}_{1}.
\en
Repeating the reflection and path arguments used in \cite{Hu2012, HL2014}, one can always find a desired flat surface whose maximum extension satisfies the above condition.

Since $y^*$ is the symmetric point of $y$ with respect to the cell $\widetilde{\Pi}$, in view of the reflection principle for the Helmholtz equation, we get
$$w_{2}(x;y)=-w_{2}(x;y^{*}),~~x\in\Omega_{2}\backslash \{y\}.$$
It is obvious that
$$\lim\limits_{x\rightarrow y^{*}}|w_{2}(x;y)|=\lim\limits_{x\rightarrow y^{*}}|-w_{2}(x;y^{*})|=\lim\limits_{x\rightarrow y^{*}}|\widetilde{w}^{in}(x;y^*)+w^{sc}(x;y^*)|=\infty,$$
due to the singularity of $\widetilde{w}^{in}(x;y^*)$ at $x=y^*$.
On the other hand, it holds that
$$\lim\limits_{x\rightarrow y^{*}}|w_{2}(x;y)|=|w_{2}(y^{*};y)|=|\widetilde{w}^{in}(y^*;y)+w^{sc}(y^*;y)|<\infty.$$
This contradiction implies $\Gamma_{1}=\Gamma_{2}$.

\end{proof}
\subsection{Uniqueness in determining sound-hard surfaces}\label{sec:hard}
The aim of this subsection is to carry over the uniqueness results established in Section \ref{sec:soft} to sound-hard surfaces. We sketch the proofs, since most arguments are similar to the sound-soft case.
We first prove the Rellich lemma in a half space under the Neumann boundary condition.
\begin{thm}\label{hardplane}
Assume $\Gamma_{1}$ and $\Gamma_{2}$ are two sound-hard surfaces with local perturbations such that their far-field patterns coincide for the plane wave with the direction $d\in \mathbb{S}^{2}_{-}$. Then
$$u^{sc}_{1}(x;d)=u^{sc}_{2}(x;d)~~for~all~x\in\Omega_{1}\cap \Omega_{2},~d\in\mathbb{S}^{2}_{-}.$$
\end{thm}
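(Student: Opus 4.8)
The plan is to mimic the proof of Theorem \ref{farscat} (the sound-soft case) almost verbatim, replacing the odd reflection of the scattered field by the even reflection dictated by the Neumann boundary condition. Specifically, since the total field satisfies $\partial_\nu u=0$ on $\Gamma_j$, the scattered field $u^{sc}_j$ inherits $\partial_{x_3}u^{sc}_j=0$ on the flat part $\{x_3=0,\,|x|>R\}$ (because $\widetilde u^{in}$ also satisfies the homogeneous Neumann condition there, by the explicit form of $u^{re}$ in the Neumann case). Hence I would extend $u^{sc}_j$ from $\mathbb{R}^3_+\backslash\overline{B^+_R}$ to $\{|x|>R\}$ by the \emph{even} reflection
\[
V^{sc}_j(\tilde x,x_3)=
\begin{cases}
u^{sc}_j(\tilde x,x_3), & |x|>R,\ x_3>0,\\
u^{sc}_j(\tilde x,0), & |x|>R,\ x_3=0,\\
u^{sc}_j(\tilde x,-x_3), & |x|>R,\ x_3<0.
\end{cases}
\]
By the matching of the function values and of the normal derivatives across $\{x_3=0\}$ (the latter now being the \emph{automatic} continuity of $\partial_{x_3}V^{sc}_j$, since $\partial_{x_3}u^{sc}_j$ is odd and vanishes on the plane, while $V^{sc}_j$ itself is even so its tangential derivatives match trivially), one again invokes \cite[Lemma 6.13]{K2011} to conclude $\triangle V^{sc}_j+k^2V^{sc}_j=0$ in all of $\{|x|>R\}$.

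Next I would verify that $V^{sc}_j$ is a radiating solution in $\{|x|>R\}$ and compute its far-field pattern: from \eqref{2.3} one gets, for $x_3<0$,
\[
V^{sc}_j(x;d)=\frac{e^{ik|x|}}{|x|}\Big\{V^{sc}_{j,\infty}(\hat x';d)+O\big(\tfrac1{|x|}\big)\Big\},
\]
so that $V^{sc}_{j,\infty}(\hat x;d)=V^{sc}_{j,\infty}(\hat x';d)$, where $\hat x'$ is the reflection of $\hat x$ about the $x_3=0$ plane (note the \emph{plus} sign here, in contrast with the minus sign in Theorem \ref{farscat}). Since by hypothesis $u^{sc}_{1,\infty}=u^{sc}_{2,\infty}$ on $\mathbb{S}^2_+$, this gives $V^{sc}_{1,\infty}=V^{sc}_{2,\infty}$ on all of $\mathbb{S}^2$. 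Setting $W:=V^{sc}_1-V^{sc}_2$, the asymptotics force $W(x;d)=O(|x|^{-2})$, hence $\int_{S_R}|W|^2\,ds\to0$ as $R\to\infty$, and Rellich's lemma \cite[Lemma 2.12]{CK2013} yields $W\equiv0$ in $\{|x|>R\}$. Therefore $u^{sc}_1=u^{sc}_2$ in $\mathbb{R}^3_+\cap\{|x|>R\}$, and the unique continuation principle \cite[Theorem 8.6]{CK2013} propagates this identity to the connected open set $\Omega_1\cap\Omega_2$.

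In fact, as Remark \ref{remark 2.1} already anticipates, this is precisely the statement that ``the proof of Theorem \ref{farscat} extends to the Neumann boundary condition trivially,'' so the only genuine content is bookkeeping: tracking the sign changes that the even (rather than odd) reflection introduces (no sign in the reflected far-field relation, no sign in the definition of $V^{sc}_j$ below the plane), and checking that the transmission conditions across $\{x_3=0\}$ needed for \cite[Lemma 6.13]{K2011} are satisfied by the even extension. The main — though still minor — obstacle is making sure the normal-derivative continuity is correctly attributed: here it follows from $\partial_{x_3}u^{sc}_j\big|_{x_3=0}=0$ for $|x|>R$, which in turn uses the Neumann condition on $\Gamma_j$ together with the explicit Neumann reflected wave $u^{re}(x;d)=e^{ik(\alpha x_1+\beta x_2+\gamma x_3)}$, whose $x_3$-derivative at $x_3=0$ cancels that of $u^{in}$. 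Once this is in place, no new ideas beyond those in Theorem \ref{farscat} are required.
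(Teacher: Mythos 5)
Your proposal is correct and follows essentially the same route as the paper: the paper's proof of Theorem \ref{hardplane} consists precisely of the even reflection of $u^{sc}_1$ across $\{x_3=0\}$ for $|x|>R$ (enabled by $\partial_{x_3}u^{sc}_j=0$ on the flat part, from the Neumann condition for $u_j$ and for $\widetilde u^{in}$), followed by the same Rellich-lemma and unique-continuation argument as in Theorem \ref{farscat}. Your write-up in fact supplies details (the sign in the reflected far-field relation, the verification of the transmission conditions) that the paper leaves implicit.
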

\begin{proof}
Apparently, $u^{sc}_{1}(x)$ satisfies the Helmholtz equation in $\Omega_{1}$. Applying even extension of $u^{sc}_{1}(x)$ from $\mathbb{R}^{3}_{+}\backslash \overline{B^{+}_{R}}$ to $\mathbb{R}^{3}\cap\{|x|>R\}$, we thus obtain
\be\label{4.1}
 W^{sc}_{1}(x_{1},x_{2},x_{3})=
\left\{\begin{array}{l}
u^{sc}_{1}(x_{1},x_{2},x_{3}),~~|x|>R, ~x_{3}>0,\\
\partial _{x_{3}}u^{sc}_{1}(x_{1},x_{2},0),~~|x|>R,~x_{3}=0,\\
u^{sc}_{1}(x_{1},x_{2},-x_{3}),~~|x|>R,~x_{3}<0.
\end{array}
\begin{array}{l}
\end{array}\right.
\en
Arguing the same as in the proof of Theorem \ref{farscat}, we get
$$u_{1}^{sc}(x;d)=u_{2}^{sc}(x;d)~~\mbox{in}~\Omega_{1}\cap \Omega_{2}$$
for any $d\in \mathbb{S}^{2}_{-}$.
\end{proof}
\begin{thm}\label{harduni}
Assume $\Gamma_{1}$ and $\Gamma_{2}$ are two sound-hard surfaces with local perturbations such that their far-field patterns $u^{sc}_{1,\infty}(\hat{x};d)$ and $u^{sc}_{2,\infty}(\hat{x};d)$ coincide for infinite incident plane waves with all directions $d\in \mathbb{S}^{2}_{-}$ and one fixed wave number. Then $\Gamma_{1}=\Gamma_{2}.$
\end{thm}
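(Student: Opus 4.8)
The plan is to transcribe the proof of Theorem \ref{alldirection} to the Neumann setting, replacing every appeal to the Dirichlet condition $u=0$ (or $w=0$) on $\Gamma$ by the condition $\partial_\nu u=0$ (or $\partial_\nu w=0$). Three preliminary facts are required, each obtained from its sound-soft analogue by the same argument with traces and normal derivatives interchanged. First, the Rellich-type lemma for sound-hard surfaces is Theorem \ref{hardplane}, which is already established; its point-source version follows by the same even-reflection-plus-Rellich argument, using the even extension \eqref{4.1} (note that $\partial_{x_3}\widetilde{w}^{in}=0$ on $\{x_3=0\}$ in the Neumann case, so $\partial_{x_3}w^{sc}=0$ on the flat part of $\Gamma$, which is exactly what the even reflection needs). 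Second, the mixed reciprocity relation $4\pi w^{sc}_\infty(-d;z)=u^{sc}(z;d)$ for $d\in\mathbb{S}^2_-$, $z\in\Omega$: the proof of Lemma \ref{mixed} via Green's identities carries over verbatim once one notes that each boundary integrand $u^{sc}\,\partial_\nu w^{sc}-w^{sc}\,\partial_\nu u^{sc}$ on the flat part of $\Gamma$ and on $\Lambda_R$ (and likewise with $\widetilde{w}^{in},\widetilde{u}^{in}$ in place of $w^{sc},u^{sc}$) now vanishes because the normal derivatives, not the traces, are zero there; in particular the final step works because $\int_{\Lambda_R}\{w\,\partial_\nu u-u\,\partial_\nu w\}\,ds=0$ by $\partial_\nu u=\partial_\nu w=0$ on $\Gamma$. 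Third, the symmetry relation $w^{sc}(x;y)=w^{sc}(y;x)$ for $x,y\in\Omega$ — the Neumann analogue of Lemma \ref{sym} — is proved by the same nine-case analysis, again with $\partial_\nu w=0$ replacing $w=0$ on $\Gamma$.

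Granting these, one argues as in Theorem \ref{alldirection}. If the far-field patterns of two sound-hard surfaces $\Gamma_1,\Gamma_2$ with local perturbations $\Lambda_R^{(1)},\Lambda_R^{(2)}$ coincide for all $d\in\mathbb{S}^2_-$ at a fixed wave number, then Theorem \ref{hardplane} gives $u^{sc}_1(\cdot;d)=u^{sc}_2(\cdot;d)$ in $G:=\Omega_1\cap\Omega_2$ for every $d$; the mixed reciprocity relation then yields $w^{sc}_{1,\infty}(\hat y;x)=w^{sc}_{2,\infty}(\hat y;x)$ for all $\hat y\in\mathbb{S}^2_+$ and $x\in G$; and the point-source Rellich lemma upgrades this to $w^{sc}_1(y;x)=w^{sc}_2(y;x)$, hence $w_1(y;x)=w_2(y;x)$ since $\widetilde{w}^{in}$ is common to both, for all $x,y\in G$.

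It remains to derive a contradiction from $\Lambda_R^{(1)}\neq\Lambda_R^{(2)}$. As in the proof of Theorem \ref{alldirection}, choose $x^*\in\partial G$ with $x^*\in\Gamma_1$ but $x^*\notin\Gamma_2$, so that $x^*$ lies in the open set $\Omega_2$; take $x^*$ to be a regular point of $\Gamma_1$ with $x^*_3\neq0$, and set $z_n:=x^*+\frac1n\nu(x^*)$, which lies in $G$ for $n$ large. Since $x^*\in\Omega_2$ and $(x^*)'\neq x^*$, the two singular contributions of a point-source incidence (located near $z_n$ and near $z_n'$, the latter converging to $(x^*)'\neq x^*$) stay away from $x^*$, so $(x,z)\mapsto w^{sc}_2(x;z)$ is jointly smooth near $(x^*,x^*)$ and $\partial_{\nu(x^*)}w^{sc}_2(x;z_n)\big|_{x=x^*}$ tends to the finite value $\partial_{\nu(x^*)}w^{sc}_2(x;x^*)\big|_{x=x^*}$ as $n\to\infty$ (alternatively one may use the symmetry relation and well-posedness, as in the proof of Theorem \ref{alldirection}). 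On the other hand $w^{sc}_1(\cdot;z_n)=w^{sc}_2(\cdot;z_n)$ in $G$, which is a one-sided neighbourhood of $x^*$ with $\Gamma_1$ as its boundary there, so by elliptic regularity up to $\Gamma_1$ the same limit equals $\lim_{n}\partial_{\nu(x^*)}w^{sc}_1(x;z_n)\big|_{x=x^*}$; but $\partial_\nu w_1=0$ on $\Gamma_1$ forces $\partial_{\nu(x^*)}w^{sc}_1(x;z_n)\big|_{x=x^*}=-\partial_{\nu(x^*)}\big[\Phi(x;z_n)+\Phi(x;z_n')\big]\big|_{x=x^*}$, and since $x^*-z_n=-\frac1n\nu(x^*)$ one computes $\nu(x^*)\cdot\nabla_x\Phi(x^*;z_n)=\frac1{4\pi}(n^2-ikn)e^{ik/n}\to\infty$, while $\nu(x^*)\cdot\nabla_x\Phi(x^*;z_n')$ stays bounded. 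This contradicts the finite limit, so $\Lambda_R^{(1)}=\Lambda_R^{(2)}$ and $\Gamma_1=\Gamma_2$.

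The genuinely new difficulty, compared with the sound-soft argument, is concentrated in this last blow-up step: the Dirichlet proof reaches a contradiction already at the level of the value $w^{sc}_1(x^*;z_n)=-\widetilde{w}^{in}(x^*;z_n)$, whereas here one must work with normal derivatives. This forces (a) $C^1$, rather than merely $C^0$, control of $w^{sc}_2(\cdot;z_n)$ near $x^*$, hence elliptic regularity up to the only-Lipschitz surface $\Gamma_1$ at the chosen point; (b) isolating the leading $n^2$ term of $\nu(x^*)\cdot\nabla_x\Phi(x^*;z_n)$ and verifying that it is not cancelled by the reflected-source term — which is precisely why $x^*$ must be taken with $x^*_3\neq0$ (so that $z_n'$ converges to $(x^*)'$ rather than to $x^*$), the residual case of a vertical normal at height zero needing to be excluded or circumvented by a reflection; and (c) checking that the boundary integrals in the Neumann versions of Lemmas \ref{mixed} and \ref{sym} really do vanish. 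Apart from this step, the proof is a routine repetition of material already in the paper.
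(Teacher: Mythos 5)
Your proposal takes the same route as the paper: the paper's own proof of Theorem \ref{harduni} is exactly the reduction you describe, namely the Neumann mixed reciprocity relation \eqref{4.2} and symmetry relation \eqref{4.3} obtained by rerunning Lemmas \ref{mixed} and \ref{sym} with $\partial_\nu u=\partial_\nu w=0$ on $\Gamma$, combined with Theorem \ref{hardplane} and the blow-up of point sources as in Theorem \ref{alldirection}. The normal-derivative blow-up that you work out explicitly (including the delicate cancellation when $x^*_3=0$ with vertical normal, which the paper does not address in either the Dirichlet or the Neumann case) is left implicit in the paper, so your account is, if anything, more careful on that final step.
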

\begin{proof}
Recalling that the boundary condition $\partial_{\nu}u_{j}(x;d)=\partial_{\nu}w_{j}(x;z)=0~\mbox{on}~\Gamma_{j} (j=1,2)$, we can use the same method as in Lemma \ref{mixed} to get the mixed reciprocity relation
\be\label{4.2}
4\pi w^{sc}_{j,\infty}(-d;z)=u^{sc}_{j}(z;d),
\en
where $z\in\Omega$, $d\in\mathbb{S}^{2}_{-}$. In view of the boundary condition $\partial_{\nu}w_{j}(\cdot,x)=\partial_{\nu}w_{j}(\cdot,y)=0~\mbox{on}~\Gamma_{j}$, we can get the symmetry relation
\be\label{4.3}
w^{sc}_{j}(x;y)=w^{sc}_{j}(y;x),~~x, y \in \Omega,
\en
following the lines in the proof of Lemma \ref{sym}. Consequently, we get $w^{sc}_{1}(y;x)=w^{sc}_{2}(y;x)$ for all $y, x \in \Omega$. The proof can be carried out similarly to that of Theorem \ref{alldirection} by the idea of blow-up of point sources to reach $\Gamma_{1}=\Gamma_{2}$.

\end{proof}
\begin{thm}\label{polyuni}
Under the Neumann boundary condition, a polyhedral surface can be uniquely determined by the far-field pattern of a point source wave or a plane wave with an arbitrary incident direction.
\end{thm}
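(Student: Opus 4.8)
The plan is to mirror the sound-soft arguments of Theorems \ref{polyplane} and \ref{polypoint} verbatim, replacing the Dirichlet ingredients by their Neumann counterparts. First I would invoke Theorem \ref{hardplane} in place of Theorem \ref{farscat} to conclude that if the far-field patterns coincide (for either a plane wave $u^{in}(\cdot;d)$ or a point source $w^{in}(\cdot;y)$ with $y\in\Omega_1\cap\Omega_2$), then the corresponding scattered fields — and hence the total fields $u_1=u_2$, respectively $w_1=w_2$ — agree on $\Omega_1\cap\Omega_2$. Assuming $\Gamma_1\neq\Gamma_2$, pick as before a piece $S:=(\Gamma_1\setminus\Gamma_2)\cap\partial(\Omega_1\cap\Omega_2)\neq\emptyset$. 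On $S$ the field $u_2$ (resp.\ $w_2$) inherits from $u_1$ (resp.\ $w_1$) the Neumann condition $\partial_\nu u_2=0$ on $S$ (resp.\ $\partial_\nu w_2=0$), since $S\subset\Gamma_1$ and $S\subset\overline{\Omega_2}$, with $\nu$ the common normal. The object to track is therefore the \emph{Neumann nodal set}: flat surfaces (cells) contained in $\Omega_2$ on which the normal derivative of $u_2$ (resp.\ $w_2$) vanishes; $S$ contains such a bounded cell $\Pi$, which by real analyticity of $u_2$ in $\Omega_2$ (resp.\ $w_2$ in $\Omega_2\setminus\{y\}$) extends to a maximal flat cell $\widetilde\Pi$.

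Next I would run the reflection-and-path argument to arrange that $\widetilde\Pi$ is unbounded in the positive $x_3$ direction (plane-wave case) or that the reflection $y^*$ of $y$ across $\widetilde\Pi$ still lies in $\Omega_2$ (point-source case); this is exactly the step cited from \cite{AR2005,EY2006,Hu2012,HL2014,LZ2006} and discussed for Maxwell in Section \ref{elec}, with the sole modification that one uses the reflection principle for the Helmholtz equation across a \emph{Neumann} plane, namely $u(x)\mapsto u(x^*)$ (even reflection) rather than the odd reflection used in the sound-soft case — so that a zero of $\partial_\nu u_2$ on $\widetilde\Pi$ propagates the even-reflected solution. With $\widetilde\Pi$ in hand, for the plane wave one computes on $\widetilde\Pi=\{x_3=ax_1+bx_2+c\}\cap\{x_3\ge L\}$ that in the Neumann case $u^{in}(x;d)+u^{re}(x;d)=e^{ik(\alpha x_1+\beta x_2)}\bigl(e^{-ik\gamma x_3}+e^{ik\gamma x_3}\bigr)=2e^{ik(\alpha x_1+\beta x_2)}\cos(k\gamma x_3)$, whose modulus does not tend to $0$ as $|x|\to\infty$ along $\widetilde\Pi$ (treating separately the degenerate case $a=b=0$ by an extra reflection across $\{x_3=c\}$, as in Theorem \ref{polyplane}); since $\partial_\nu u_2=\partial_\nu u_2^{sc}+\partial_\nu\widetilde u_2^{in}=0$ on $\widetilde\Pi$ does \emph{not} immediately bound $u_2^{sc}$, I would instead use that $u_2=0$-type argument must be replaced: here $\widetilde\Pi$ being a Neumann cell for $u_2$ means the even reflection of $u_2$ across $\widetilde\Pi$ is a Helmholtz solution, and iterating the reflections in the unbounded direction forces $u_2^{sc}$ to inherit periodicity-type growth incompatible with the radiation condition $\lim_{|x|\to\infty}|u_2^{sc}(x;d)|=0$ on $\widetilde\Pi$, exactly the contradiction in Theorem \ref{polyplane}. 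For the point source one argues as in Theorem \ref{polypoint}: the even reflection gives $w_2(x;y)=w_2(x;y^*)$ on $\Omega_2\setminus\{y\}$, and then $\lim_{x\to y^*}|w_2(x;y)|=\lim_{x\to y^*}|w_2(x;y^*)|=\infty$ because $\widetilde w^{in}(\cdot;y^*)$ is singular at $y^*$, whereas continuity of $w_2(\cdot;y)$ at $y^*\in\Omega_2\setminus\{y\}$ gives a finite value — a contradiction. Hence $\Gamma_1=\Gamma_2$ in both cases.

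The main obstacle is the reflection-principle bookkeeping in the Neumann setting: one must be careful that the correct reflection for a Neumann cell is the \emph{even} reflection $u(x)\mapsto u(x^*)$, and that the path argument which promotes an arbitrary bounded cell to one unbounded in the $+x_3$ direction (or to one whose mirror source stays in $\Omega_2$) still goes through when the governing condition on cells is $\partial_\nu u=0$ rather than $u=0$. This has been carried out for bounded sound-hard polyhedra in \cite{EY2006} and for unbounded structures elsewhere; I would simply cite those references and the Maxwell discussion of Section \ref{elec}, noting that the geometric combinatorics of the reflection group generated by the faces is identical and only the transformation law of the field across a face changes sign-convention. A minor point to check is that $\widetilde u^{in}$ for the point source and the Neumann reflected field $w^{re}(x;z)=\frac{1}{4\pi}\frac{e^{ik|x-z'|}}{|x-z'|}$ produce, after the extra reflections, a genuine singularity at the final mirror image $y^*$ — which holds because the even reflection preserves the $+$ sign in front of the fundamental solution, so no cancellation of the pole occurs.
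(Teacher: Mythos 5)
Your point-source case is essentially the paper's argument: the Neumann nodal set (cells with $\partial_\nu w_2=0$), the reflection-and-path step to get a cell $\widetilde\Pi$ whose mirror image $y^*$ of the source stays in $\Omega_2$, and the singular-versus-regular contradiction at $y^*$ via the even reflection. That half is fine.

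The plane-wave case, however, has a genuine gap. You correctly spot the obstacle — on a Neumann cell only $\partial_\nu u_2$ vanishes, so the sound-soft trick of writing $u_2^{sc}=-(u^{in}+u^{re})$ on $\widetilde\Pi$ and comparing moduli is unavailable, and your computation of $u^{in}+u^{re}=2e^{ik(\alpha x_1+\beta x_2)}\cos(k\gamma x_3)$ on $\widetilde\Pi$ is therefore beside the point — but your proposed replacement ("iterating the reflections in the unbounded direction forces $u_2^{sc}$ to inherit periodicity-type growth incompatible with the radiation condition") is not an argument. A single Neumann plane generates a single even reflection, not an iteration; you would need two parallel perfect planes to produce any periodicity, and no such second plane has been constructed, nor is it explained why the reflected field would grow rather than simply extend. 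The paper resolves this by staying with the normal derivative itself: the far-field asymptotics give $\partial_\nu u_2^{sc}\to 0$ along $\widetilde\Pi$, hence on $\widetilde\Pi=\{x_3=Ax_1+Bx_2+C\}\cap\{x_3\geq E\}$, with $\nu=(-A/h,-B/h,1/h)$ and $d'=(\alpha,\beta,\gamma)$, the boundary condition forces
\[
\lim_{|\tilde x|\to\infty}\bigl[(d\cdot\nu)+e^{2ik\gamma F(\tilde x)}(d'\cdot\nu)\bigr]=0 .
\]
Since $\gamma>0$ and $\nu_3\neq0$, the numbers $d\cdot\nu$ and $d'\cdot\nu$ cannot vanish simultaneously, and choosing a sequence $\tilde x_n$ on $\widetilde\Pi$ with $|F(\tilde x_n)|\to\infty$ (possible because $A^2+B^2\neq0$ and $\widetilde\Pi$ is unbounded in the $+x_3$ direction) shows the oscillatory factor $e^{2ik\gamma F(\tilde x_n)}$ has no limit, a contradiction in both cases $d\cdot\nu=0$ and $d\cdot\nu\neq0$. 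To repair your write-up, replace the "periodicity-type growth" step by this explicit normal-derivative computation (and note that $\partial_\nu u^{sc}\to0$ along $\widetilde\Pi$ does follow from the radiation condition/asymptotic expansion, a point worth stating).
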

\begin{proof}
Supposing on the contrary that $\Gamma_{1}\neq\Gamma_{2}$, we shall verify the uniqueness results for  point source and plane waves separately.

\emph{Case 1. A polyhedral surface can be uniquely determined by a point source wave.}  In the Neumann case, the nodal set is defined by the flat surfaces (cells) with vanishing normal derivative of the total field. By reflection and path arguments, one can always find a cell $\Pi$ such that its extension $\widetilde{\Pi}$ fulfills the same condition in the proof of Theorem \ref{polypoint}, which gives arise to the conclusion.

\emph{Case 2. A polyhedral surface can be uniquely determined by a plane wave with an arbitrary incident direction.} Let the flat surface $\Pi$ and its extension $\widetilde{\Pi}$ be the same as those defined in the proof of Theorem \ref{polyplane}. In particular, $\widetilde{\Pi}$ can be extended to infinity in the positive $x_3$-direction and  $\partial_{\nu} u_{2}(x;d)=0~\mbox{on}~\widetilde{\Pi}$, where $\nu:=(\nu_{1},\nu_{2},\nu_{3})$ is the unit normal vector of $\widetilde{\Pi}$ directed into $\mathbb{R}^{3}_{+}$. Without loss of generality, we assume
\be\label{4.4}
\widetilde{\Pi}:=\{x_{3}=Ax_{1}+Bx_{2}+C:=F(\tilde{x}), A,B,C\in \mathbb{R}, A^{2}+B^{2}\neq 0\}\cap\{x_3\geq E\},
\en
where $E\in \mathbb{R}$ is some positive number. Obviously, $\nu=(-A\setminus h,-B\setminus h, 1\setminus h)$ is the unit normal vector of $\widetilde{\Pi}$, where $h:=\sqrt{A^2+B^2+1}$. 

In view of the expressions of the incident plane wave and its reflected wave, we get
\be\label{4.5}
\partial_{\nu}(u^{in}+u^{re})(x;d)=ike^{ik(\alpha x_{1}+\beta x_{2})}[e^{-ik\gamma x_{3}}(d\cdot \nu)+e^{ik\gamma x_{3}}(d'\cdot \nu)],
\en
where $d':=(\alpha,\beta,\gamma)$. From the construction of $\widetilde{\Pi}$, we know
$$0=\partial_{\nu}u_{2}(x;d)=ike^{ik(\alpha x_{1}+\beta x_{2})}[e^{-ik\gamma x_{3}}(d\cdot \nu)+e^{ik\gamma x_{3}}(d'\cdot \nu)]+\partial_{\nu} u^{sc}_{2}(x;d),~~x\in\widetilde{\Pi}.$$
It is obvious that $\lim\limits_{|x|\rightarrow\infty}\partial_{\nu}u_{2}^{sc}(x;d)=0,~x\in\widetilde{\Pi}$, due to the Sommerfeld radiation condition of $u^{sc}_{2}$. Hence,
\be\label{4.6}
0=\lim\limits_{|x|\rightarrow\infty}e^{ik(\alpha x_{1}+\beta x_{2})}[e^{-ik\gamma x_{3}}(d\cdot \nu)+e^{ik\gamma x_{3}}(d'\cdot \nu)],~x\in\widetilde{\Pi}.
\en
The relation \eqref{4.6} implies that
\be\label{4.7}
0=\lim\limits_{|\tilde{x}|\rightarrow\infty}[(d\cdot \nu)+e^{2ik\gamma F(\tilde{x})}(d'\cdot \nu)],~\tilde{x}\in\widetilde{\Pi}.
\en
Recalling that the incident angle $\varphi \in (-\frac{\pi}{2},\frac{\pi}{2})$ and $\theta \in (0, 2\pi)$, we can get $\alpha\in(-1,1)$, $\beta\in(-1,1)$ and $\gamma\in(0,1]$. Obviously, $d\cdot \nu$ and $d'\cdot \nu$ cannot be zero simultaneously.

Below, we will discuss two cases. Case $(\romannumeral1)$:~$d\cdot \nu=0$. In this case, $d'\cdot \nu\neq 0$, since $\nu_3=1/h\neq 0$.
Then, by \eqref{4.7} we find
\be\label{4.8}
0=\lim\limits_{|\tilde{x}|\rightarrow\infty}e^{2ik\gamma F(\tilde{x})}(d'\cdot \nu).
\en
We can always choose a sequence $x_{n}=(\widetilde{x}_{n},x_{3})\in\widetilde{\Pi}$ such that $\lim\limits_{|n|\rightarrow\infty}|F(\widetilde{x}_{n})|=+\infty$. For this sequence, it is easy to conclude that the limit on the right side of (4.8) does not exist, which leads to a contradiction.

Case $(\romannumeral2)$:~$d\cdot \nu\neq0$. From \eqref{4.7}, we get
\be\label{4.9}
-1=\lim\limits_{|\tilde{x}|\rightarrow\infty}e^{2ik\gamma F(\tilde{x})}\frac{d'\cdot \nu}{d\cdot \nu}.
\en
Similar to the proof of Case $(\romannumeral1)$, we can also derive a contradiction. This finishes the proof of $\Gamma_1=\Gamma_2$ for an incident plane wave.
\end{proof}

\section{Electromagnetic scattering problem}\label{elec}
This section is concerned with the inverse time-harmonic electromagnetic scattering problems of
determining perfect conductors from far-field measurement of the scattered
electric fields. The forward electromagnetic scattering from a locally perturbed rough surface is investigated in \cite{LWZ}.
We note that the uniqueness results with infinitely many plane waves (see Theorems \ref{alldirection} and \ref{harduni} in the acoustic case) carry over to  electromagnetic scattering problems straightforwardly.
Readers can be referred to \cite{K2002} and \cite[Chapter 7.1, Theorem 6.31]{CK2013} for more details in the case of a bounded conductor as well as the mixed reciprocity relation in electromagnetic scattering.
 Hence, in this section we shall pay our attention to the unique determination of polyhedral conductors with an electromagnetic source wave, which seems new even for bounded perfect conductors. 

 \subsection{Problem descriptions}

  It is supposed that
the incoming electromagnetic field is generated by the electric dipole
located at some source position $y\in\R^3$:
\be\label{electricdipole}
H^{in}(x,y,p):=\curl\,[p\Phi(x,y)],\quad
E^{in}(x,y,p):=\frac{i}{k}\curl H^{in}(x,y,p),\quad x\neq y,\quad
 \en
where $p\in\R^3$ is a constant vector representing the polarization
and $\Phi$
is the fundamental solution to the Helmholtz equation $\Delta u+k^2u=0$ with a
positive wave number $k$.
We will indicate the dependence of the scattered field and of the total field on the dipole point $y$ and the polarization $p$ by writing
$E^{sc}(x,y,p)$, $H^{s}(x,y,p)$ and $E(x,y,p)$, $H(x,y,p)$, respectively.
Let $\textbf{D}$ be a scatterer in $\R^3$ with Lipschitz boundary $\pa \textbf{D}$ such that the exterior
$\textbf{D}^{e}:=\R^3\ba\ov{\textbf{D}}$ of $\textbf{D}$ is connected.
The scatterer $\textbf{D}$ gives rise to a pair of scattered electromagnetic fields
$(E^{sc} , H^{sc} )\in H_{loc}(\curl, \textbf{D}^{e})\times H_{loc}(\curl, \textbf{D}^{e})$ which satisfies the
time-harmonic Maxwell equations
\be\label{EH_Maxwellequations}
\curl E^{sc} -ikH^{sc} =0,\quad \curl H^{sc} +ikE^{sc} =0\quad\mbox{in}\,\textbf{D}^{e}.
\en
For a perfect conductor, we have the perfectly conducting boundary condition, i.e.,
\be\label{pec}
\nu\times E=0\quad\mbox{on}\,\pa \textbf{D},
\en
where $\nu$ is the unit outward normal to $\pa \textbf{D}$ and $E:=E^{in} +E^{sc} $ is the total electric field.


In this section we shall consider two cases of $\textbf{D}$.

Case (i): $\textbf{D}$ is a compact subset in $\R^3$, which means a bounded perfect conductor.
In this case,
The scattered field $(E^{sc} , H^{sc} )$ is required to satisfy the Silver-M\"{u}ller radiation condition
\be\label{SMrc}
\lim_{r\rightarrow\infty}(H^{sc} \times x-r E^{sc} )=0
\en
where $r=|x|$ and the limit holds uniformly in all directions $\widehat{x}:=x/r$. In particular, the electric field has the asymptotic form
\ben
E^{sc} (x)=\frac{e^{ik|x|}}{|x|}\left\{ E^{sc} _\infty(\hat{x})+O(\frac{1}{|x|})    \right\},\qquad |x|\rightarrow\infty,
\enn
where $E^{sc} _\infty$ is known as the electric far-field pattern.
It is well known that there exists a unique solution $(E^{sc} , H^{sc} )\in H_{loc}(\curl, \textbf{D}^{e})\times H_{loc}(\curl, \textbf{D}^{e})$
of the scattering system (\ref{EH_Maxwellequations})-(\ref{SMrc}) (see e.g., \cite{CK2013,Monk}).
The inverse electromagnetic scattering problem we are interested in is to determine $\textbf{D}$ from  knowledge of the tangential components
$\nu\times E^{sc} $ of the electric far-field pattern $E^{sc} _\infty(\cdot, y, p)$ measured on $\mathbb{S}^2$.

Case (ii): $\textbf{D}$ is an unbounded scatterer but with a locally rough boundary $\pa \textbf{D}=:\Gamma$.
It is supposed that the boundary $\pa \textbf{D}$ is given by the graph of a bounded and
uniformly Lipschitz continuous function $x_3=f(\tilde{x})$ such that $f=0$ for $|\tilde{x}|>R$ for some $R>0$.
 Observing that the incoming wave fulfills the Silver-M\"{u}ller radiation condition (\ref{SMrc}), we require the scattered field to fulfill the half-space Silver-M\"{u}ller radiation condition in $x_3>0$, giving rise to the electric far-field pattern $E^{sc} _\infty(\hat{x})$ for all $\hat{x}\in \mathbb{S}^2_+$.
The well-posedness of this scattering problem (\ref{EH_Maxwellequations})-(\ref{pec}) 
has been established in \cite{LWZ} under the weaker  Silver-M\"{u}ller radiation condition of  integral type  for
 overfilled cavity scattering problems.
The inverse problem in this case is to determine $\textbf{D}$ from $E^{sc} _\infty(\hat{x})$ for all $\hat{x}\in \mathbb{S}^2_+$.

Applications of these two inverse scattering problems occur in such
diverse areas as medical imaging, nondestructive testing, radar, remote sensing, and geophysical exploration.
As in the acoustic case, one can easily see that these inverse problems are formally determined with all $x\in\G$, a single electric dipole located at $y\in \textbf{D}^e$ and a
fixed polarization $p\in\R^3$, since the measurements $E^{sc} _\infty(\hat{x})$ depend on the same number of variables as the boundary $\pa \textbf{D}$
to be reconstructed. 
The aim of this paper is to prove the unique determination of polyhedral-type perfect conductors $\textbf{D}$ by a single electric dipole with
a fixed wave number, a fixed dipole point and a fixed polarization.
The main tools we have used are the reflection principle for the Maxwell equations \cite{LiuYamamotoZou} and the path arguments developed
in \cite{HL2014}.

\subsection{Preliminary definition and reflection principle}\label{PolyRef}

In this subsection we introduce preliminary definitions and the reflection principle for the Maxwell's equations. We firstly clarify the definition of a general \emph{polyhedral scatterer}.
\begin{defn}\label{def:polyhedral}{\bf (Polyhedral scatterer)}
A subset $\textbf{D} \subset \R^3$ is called a \emph{polyhedral scatterer} if $\pa \textbf{D}$ is the union of
finitely many cells and the exterior $\textbf{D}^e:=\R^3\ba\ov{\textbf{D}}$ of $\textbf{D}$ is connected.
Here a cell is defined as the closure of an open connected subset of a two dimensional hyperplane.
\end{defn}
Note that the definition of a \emph{polyhedral scatterer} is more general than the terminology \emph{polyhedral obstacle} used in the literature.
A \emph{polyhedral obstacle} is defined as the union of finitely many convex polyhedra, which always coincides
with the closure of its interior. Hence, a \textit{polyhedral scatterer} we have defined is more general since it can be equivalently defined as
the union of a \textit{polyhedral obstacle} and finitely many cells.

\begin{defn}\label{def:perfectplane}{\bf (Perfect plane)}
Let $\Pi$ be a  two dimensional hyperplane in $\R^3$. A non-void open connected component $\mathcal {P}\subset\Pi$ will be called a {\em perfect plane}
of $E$ if $\nu\times E(\cdot, y, p)=0$ on $\mathcal {P}$.
\end{defn}

In the following, without loss of generality, we will always assume that a perfect plane $\mathcal {P}$ is meant to have been maximally connectedly
extended in $\textbf{D}^e\ba\{y\}$ since $E$ is analytic in any compact set in $\textbf{D}^e\ba\{y\}$.

\begin{defn}\label{def:perfectset}{\bf (Perfect set)}
$\mathcal {S}$ is called a {\em perfect set} of $E$ if
\ben
\mathcal {S}:=\{x\in \ov{\textbf{D}^e}: \mbox{there exists a perfect plane $\mathcal {P}$ of $E$ passing through $x$}.\}
\enn
\end{defn}

\begin{lem}\label{Lem:PSclose}
The perfect set $\mathcal {S}$ is closed, i.e., it contains all its limit points.
\end{lem}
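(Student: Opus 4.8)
The plan is to show directly that if $\{x_n\}\subset\mathcal{S}$ with $x_n\to x_0\in\ov{\textbf{D}^e}$, then $x_0\in\mathcal{S}$; that is, I must produce a perfect plane $\mathcal{P}$ of $E$ passing through $x_0$. For each $n$, by definition there is a two-dimensional hyperplane $\Pi_n$ and a maximally extended perfect plane $\mathcal{P}_n\subset\Pi_n$ of $E$ with $x_n\in\mathcal{P}_n$. Each $\Pi_n$ is determined by a unit normal $\omega_n\in\mathbb{S}^2$ and the scalar $\omega_n\cdot x_n$; after passing to a subsequence I may assume $\omega_n\to\omega_0\in\mathbb{S}^2$ and $\omega_n\cdot x_n\to c_0$, so that $\Pi_n$ converges (locally uniformly) to the hyperplane $\Pi_0:=\{x:\omega_0\cdot x=c_0\}$, which passes through $x_0$. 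The remaining task is to transfer the condition $\nu\times E=0$ from the $\Pi_n$ to $\Pi_0$ in a neighborhood of $x_0$ inside $\textbf{D}^e\ba\{y\}$, and then invoke maximal connected extension.

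The key steps, in order, are: (1) Fix a small ball $B=B(x_0,\rho)$ with $\ov{B}\subset\textbf{D}^e\ba\{y\}$ (possible when $x_0\in\textbf{D}^e$; the boundary case $x_0\in\pa\textbf{D}$ I handle at the end). On $B$ the field $E$ is real-analytic, hence smooth, so $x\mapsto\omega_0\times E(x)$ is continuous. (2) For $n$ large, $x_n\in B$ and $\mathcal{P}_n\cap B$ is a nonempty relatively open piece of the plane $\Pi_n$; the tangential condition gives $\omega_n\times E=0$ on $\mathcal{P}_n\cap B$. I want to conclude $\omega_0\times E=0$ on $\Pi_0\cap B$. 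To do this I show that for each point $q\in\Pi_0\cap B$ there are points $q_n\in\mathcal{P}_n\cap B$ with $q_n\to q$: since $\Pi_n\to\Pi_0$ and the pieces $\mathcal{P}_n\cap B$ are ``large enough'' — here I use that a maximally extended perfect plane, once it enters $B$, either exits $\pa B$ or terminates only at $\pa\textbf{D}$ or at $y$, neither of which meets $\ov B$, so $\mathcal{P}_n\cap B$ is in fact all of $\Pi_n\cap B$ — one gets $q_n:=$ the nearest-point projection of $q$ onto $\Pi_n$, which lies in $\Pi_n\cap B=\mathcal{P}_n\cap B$ for $n$ large and satisfies $q_n\to q$. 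Passing to the limit in $0=\omega_n\times E(q_n)$ and using continuity of $E$ and $\omega_n\to\omega_0$ yields $\omega_0\times E(q)=0$. (3) Hence $\nu_0\times E=0$ on the nonempty open set $\Pi_0\cap B$, where $\nu_0:=\omega_0$; this is a perfect plane of $E$ through $x_0$, so $x_0\in\mathcal{S}$.

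For the case $x_0\in\pa\textbf{D}=\Gamma$ (or $x_0\in\pa\textbf{D}$ for a bounded $\textbf{D}$): I instead pick interior points $\xi_m\in\textbf{D}^e\ba\{y\}$ approaching $x_0$, note each $\xi_m$ can be taken on $\mathcal{P}_{n(m)}$ for suitable $n(m)$ after a further diagonal argument, and run the same normal-compactness argument to extract a limiting plane through $x_0$; alternatively, since $\mathcal{S}\cap\textbf{D}^e$ is shown closed in $\textbf{D}^e$ by the above and $\pa\textbf{D}$ is exactly the set of boundary limit points, one checks that each such $x_0$ is a limit of perfect-plane points and the closed hyperplane through the limit still carries $\nu_0\times E=0$ by continuity of $E$ up to $\Gamma$ together with the boundary condition \eqref{pec}. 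The main obstacle I anticipate is step (2): ensuring the limiting plane $\Pi_0$ does not degenerate and that the perfect pieces $\mathcal{P}_n$ do not ``shrink away'' from $x_0$ in the limit — this is precisely where the maximal-extension convention in Definition \ref{def:perfectplane} and the fact that perfect planes can only terminate on $\pa\textbf{D}\cup\{y\}$ are essential, and care is needed because a priori different $\mathcal{P}_n$ could lie in wildly different hyperplanes, which is why the compactness of $\mathbb{S}^2$ in the normal directions is the crucial input.
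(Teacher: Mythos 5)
Your proposal is correct and follows essentially the same route as the paper's proof: extract a convergent subsequence of unit normals, use the maximal-extension convention together with real-analyticity of $E$ in a small ball $B\subset \textbf{D}^e\ba\{y\}$ to conclude $\mathcal{P}_n\cap B=\Pi_n\cap B$, and then approximate each point of $\Pi_0\cap B$ by points of $\Pi_n\cap B$ (you project orthogonally onto $\Pi_n$, the paper intersects with the line in direction $\nu^\ast$, which is the same idea) and pass to the limit by continuity of $E$. The only difference is the boundary case, which you over-engineer: the paper disposes of $x_0\in\pa\textbf{D}$ in one line, since the perfectly conducting condition \eqref{pec} on the cells of a polyhedral $\pa\textbf{D}$ already provides a perfect plane through every boundary point, so no limiting argument or continuity of $E$ up to $\Gamma$ is needed there.
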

\begin{proof}
Let $\{y_n\}_{n=1}^{\infty}$ be a sequence in $\mathcal {S}$ and $y^\ast\in \ov{\textbf{D}^e}$, such that $y_n\rightarrow\,y^\ast$ as $n\rightarrow\infty$.
Let $\mathcal {P}_n$ and $\Pi_n$ be the corresponding perfect plane and the hyperplane, respectively, passing through $y_n$ such that $\nu_n\times E=0$
on $\mathcal {P}_n$, where $\nu_n$ is the unit normal to $\Pi_n$.
By possibly choosing a subsequence, we may assume that $\nu_n\rightarrow\nu^\ast$ as $n\rightarrow\infty$.
We further assume that $\nu_n\cdot\nu^\ast\neq 0$.
Denote by $\Pi^\ast$ be the hyperplane passing through $y^\ast$ with unit normal $\nu^\ast$.

If $y^\ast\in\pa \textbf{D}$, then it is clear that $y^\ast\in \mathcal {S}$ by the definition of $\mathcal {S}$ and the perfect conducting boundary
condition (\ref{pec}) on $\pa \textbf{D}$. Thus, in the following, we may assume that $y^\ast\in \textbf{D}^e$. Taking a sufficiently small ball
$B_r(y^\ast)$ centered at $y^\ast$ with radius $r$ such that $\ov{B_r(y^\ast)}\subset \textbf{D}^e\ba\{y\}$.
Here, $y\in \textbf{D}^e$ is the dipole point of the electric dipole.
A direct consequence of this result is that $\mathcal {P}_n\cap B_r(y^\ast)=\Pi_n \cap B_r(y^\ast)$.
We will show that $\mathcal {P}^\ast:=B_r(y^\ast)\cap\Pi^\ast$ is a perfect plane of $E$, i.e.,
\be
\nu^\ast\times E=0\quad\mbox{on}\, \mathcal {P}^\ast.
\en
To do this, for all $x^\ast\in \mathcal {P}^\ast$, let $l^\ast$ be the straight line passing through $x^\ast$ with direction $\nu^\ast$.
Note that for all $x\in l^\ast$, we have $x=x^\ast+t\nu^\ast$ for some $t\in\R$. While for any $x\in\Pi_n$, we have $(x-y_n)\cdot\nu_n=0$.
Since $\nu_n\cdot\nu^\ast\neq 0$, by straightforward calculations, we found that the straight line $l^\ast$ intersects with each hyperplane $\Pi_n$
and the intersection points $x_n:=l^\ast\cap \Pi_n$ are
\ben
x_n=x^\ast+t_n\nu^\ast\;\;\mbox{with}\;\;t_n=\frac{(y_n-x^\ast)\cdot\nu_n}{\nu_n\cdot\nu^\ast}, \,n=1,2,\cdots.
\enn
Then
\ben
\lim_{n\rightarrow\infty}|x_n-x^\ast|
=\lim_{n\rightarrow\infty}|t_n|
&=&\lim_{n\rightarrow\infty}\left|\frac{(y_n-x^\ast)\cdot\nu_n}{\nu_n\cdot\nu^\ast}\right|\cr
&\leq& \lim_{n\rightarrow\infty}\left|\frac{(y_n-y^\ast)\cdot\nu_n}{\nu_n\cdot\nu^\ast}\right|
    +\lim_{n\rightarrow\infty}\left|\frac{(y^\ast-x^\ast)\cdot\nu_n}{\nu_n\cdot\nu^\ast}\right|\cr
&=&0,
\enn
where we have used the fact that $y_n\rightarrow y^\ast$ and $(y^\ast-x^\ast)\cdot\nu^\ast=0$.
This means $x_n\rightarrow x^\ast$. By possibly choosing a subsequence, we may assume that $x_n\in B_r(y^\ast)$.
This implies that
\ben
\nu^\ast\times E(x^\ast)=\lim_{n\rightarrow\infty}\nu_n\times E(x_n)=0
\enn
because of the fact that $x_n\in\mathcal {P}_n$.
The proof is complete.
\end{proof}

Denote by $\mathcal{R}_{\Pi}$ the reflection with respect to a hyperplane $\Pi$ in $\R^3$.
Now we are ready to state the reflection principle for the Maxwell's equations \cite{LiuYamamotoZou}.

\begin{lem}\label{Lem:RP}
{\bf(Reflection principle for the Maxwell's equations w.r.t. perfect planes)}
Suppose that $\Omega\subset \R^3$ is a
symmetric connected domain with respect to a two dimensional hyperplane $\Pi$ and
that $\La:=\Omega\cap \Pi\neq\emptyset$.
Denote by $\Om^+$ and $\Om^-$ the two connected subdomains of $\Om$ separated by $\La$.
If $(E, H)$ solves (\ref{EH_Maxwellequations}) in $\Om$ and $\nu\times E=0$ on $\La$,
then $(E, H)$ satisfies
\be\label{odd}
E(x)+\mathcal{R}_{\Pi_0}E(\mathcal{R}_{\Pi}(x))=0,\quad H(x)-\mathcal{R}_{\Pi_0}H(\mathcal{R}_{\Pi}(x))=0,\qquad x\in \Om,
\en
where $\Pi_0$ is the hyperplane passes through the original point $O$ and is parallel to $\Pi$.
\end{lem}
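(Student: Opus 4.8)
The plan is to reduce the statement to the scalar reflection principle for the Helmholtz equation applied componentwise in a well-chosen coordinate frame. First I would perform a rigid change of coordinates so that the hyperplane $\Pi$ becomes the coordinate plane $\{x_3=0\}$; since both $E$ and $H$ transform as vector fields under rotations and Maxwell's equations \eqref{EH_Maxwellequations} are rotation-covariant, it suffices to prove the identity in these special coordinates, where $\mathcal{R}_\Pi(x)=(x_1,x_2,-x_3)$ and $\mathcal{R}_{\Pi_0}$ acts on a vector $v=(v_1,v_2,v_3)$ by $v\mapsto(v_1,v_2,-v_3)$ (reflection of the vector part about the $x_3$-axis, equivalently negation of the normal component). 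In this frame the boundary condition $\nu\times E=0$ on $\La$ reads $E_1=E_2=0$ on $\{x_3=0\}\cap\Om$.

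Next I would define the candidate extension $\widetilde{E}(x):=-\mathcal{R}_{\Pi_0}E(\mathcal{R}_\Pi(x))$ and $\widetilde{H}(x):=\mathcal{R}_{\Pi_0}H(\mathcal{R}_\Pi(x))$ on $\Om^-$ (the reflected half), and show that $\widetilde E$ agrees with $E$ to the order needed across $\La$. Concretely: the tangential components $E_1,E_2$ are odd under the map $x_3\mapsto-x_3$, while $E_3$ is even; similarly $H_1,H_2$ are even and $H_3$ is odd. One checks directly from \eqref{EH_Maxwellequations} — writing $\curl E = ikH$, $\curl H=-ikE$ componentwise — that this parity assignment is consistent: e.g. $(\curl E)_3=\pa_1 E_2-\pa_2 E_1$ is odd, matching $ikH_3$ odd; $(\curl E)_1=\pa_2 E_3-\pa_3 E_2$ is even, matching $ikH_1$ even; and so on for the remaining four equations. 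Since each Cartesian component of $E$ and $H$ satisfies the vector Helmholtz equation $\Delta v+k^2 v=0$ (a consequence of \eqref{EH_Maxwellequations}), I would invoke the scalar reflection principle — the same tool (\cite[Lemma 6.13]{K2011} / \cite{DL1955}) used in the proof of Theorem \ref{farscat} — applied to the odd components with Dirichlet data $E_1=E_2=0$ on $\La$ and to the even components with the matching-Neumann-trace argument, to conclude that the reflected field extends $(E,H)$ as a $C^1$ (indeed real-analytic) solution of Maxwell's equations across $\La$ into all of $\Om$.

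Finally, by the unique continuation property for the Helmholtz equation (cf. \cite[Theorem 8.6]{CK2013}), the globally defined solution on $\Om$ must coincide with the original $(E,H)$, which forces the symmetry identity \eqref{odd} to hold on $\Om^+$, and hence on all of $\Om$ by the construction on $\Om^-$. I expect the main obstacle to be the bookkeeping at the interface $\La$: one must verify that the tangential traces and the relevant normal traces (equivalently, enough of the first derivatives) of the reflected field match those of $E$ and $H$ from the $\Om^+$ side. This is where the boundary condition $\nu\times E=0$ enters crucially — it supplies the vanishing Dirichlet data for the odd tangential components of $E$, and then Maxwell's equations themselves propagate the correct matching to $H$ and to $E_3$ (for instance, $ikH_3=\pa_1E_2-\pa_2E_1$ vanishes on $\La$, giving the Dirichlet condition for the odd component $H_3$, while $\pa_3$ of the even components is controlled through the curl relations). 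Once the consistency of the parity assignment with \eqref{EH_Maxwellequations} is checked line by line and the scalar extension lemma is applied, the rest is routine; a remark could note that this recovers, in the half-space setting, exactly the mechanism behind the even/odd extensions used for the Helmholtz equation in \eqref{3.1} and \eqref{4.1}.
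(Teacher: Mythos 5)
Your argument is sound, but note first that the paper does not prove Lemma \ref{Lem:RP} at all: it is quoted as a known result from \cite{LiuYamamotoZou}, so there is no in-paper proof to match. Your componentwise route (rigid motion to make $\Pi=\{x_3=0\}$, parity assignment $E_1,E_2,H_3$ odd and $E_3,H_1,H_2$ even, covariance check of \eqref{EH_Maxwellequations} under this assignment, scalar reflection lemma of \cite{DL1955}/\cite[Lemma 6.13]{K2011} across $\La$, then unique continuation to identify the glued field with the original $(E,H)$ and hence obtain \eqref{odd}) is a legitimate self-contained proof, and it is close in spirit to the original one in \cite{LiuYamamotoZou}, which instead works directly with the reflected pair $\widetilde E(x)=-\mathcal{R}_{\Pi_0}E(\mathcal{R}_\Pi(x))$, $\widetilde H(x)=\mathcal{R}_{\Pi_0}H(\mathcal{R}_\Pi(x))$ defined on all of $\Om$ and concludes via vanishing Cauchy data of the difference on $\La$ plus analyticity; your version trades that Cauchy-data uniqueness step for the scalar even/odd extension lemma already used in \eqref{3.1} and \eqref{4.1}, which is a reasonable exchange. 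Since $(E,H)$ is given as a solution on all of $\Om$ and $\La$ is interior to $\Om$, the fields are real-analytic near $\La$, so all the pointwise traces you invoke are classical; your final unique-continuation step (the glued solution agrees with $E$ on the open set $\Om^+$, hence on the connected set $\Om$) is correct, and the identity on $\Om^+$ follows from the one on $\Om^-$ by applying $\mathcal{R}_{\Pi_0}$ and $\mathcal{R}_{\Pi}$ once more, as you indicate.

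One point should be tightened. The vanishing Neumann trace of the even component $E_3$ does not come from the curl relations: $(\curl E)_1$ and $(\curl E)_2$ control $\pa_3E_2$ and $\pa_3E_1$, not $\pa_3E_3$. You need the divergence-free condition $\divv E=0$ (itself a consequence of \eqref{EH_Maxwellequations}), which gives $\pa_3E_3=-\pa_1E_1-\pa_2E_2=0$ on $\La$ because $E_1=E_2=0$ identically on the open planar piece $\La$, so their tangential derivatives vanish there. The analogous facts you do state correctly: $ikH_3=\pa_1E_2-\pa_2E_1=0$ on $\La$ gives the Dirichlet condition for $H_3$, and then $\pa_3H_1=\pa_1H_3$, $\pa_3H_2=\pa_2H_3$ vanish on $\La$ by the relations $(\curl H)_2=-ikE_2$, $(\curl H)_1=-ikE_1$ together with $H_3|_{\La}=0$. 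With that one substitution (divergence rather than curl for $E_3$), the interface bookkeeping closes and the proof is complete.
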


The reflection principle described in Lemma \ref{Lem:RP} immediately gives us the following properties of solutions to the Maxwell's equations.
\begin{cor}\label{cor:1} With the notations used in Lemma \ref{Lem:RP}, we suppose that $(E, H)$ solves (\ref{EH_Maxwellequations}) in $\Om$
and $\nu\times E=0$ on $\La$.
\begin{description}
\item[(i)]
If $\La_0$ is a perfect plane of $E$ in $\Om^+$, then $\mathcal{R}_{\Pi}(\La_0)\subset\Om^-$ is also a perfect plane of $E$.
\item[(ii)] If $E$ is singular at  $y\in \Om^+$, then $E$ is also singular at  $y^{\ast}:=\mathcal{R}_\Pi(y)\in \Om^-$.
\end{description}
\end{cor}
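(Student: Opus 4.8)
The plan is to derive both parts directly from the reflection identity \eqref{odd} in Lemma \ref{Lem:RP}. Throughout, $\Pi_0$ denotes the hyperplane through the origin parallel to $\Pi$, so that $\mathcal R_{\Pi_0}$ is the linear part of the affine reflection $\mathcal R_\Pi$; in particular $\mathcal R_\Pi(x) = \mathcal R_{\Pi_0}(x) + b$ for a fixed translation vector $b$, and $\mathcal R_{\Pi_0}$ is a linear isometry with $\mathcal R_{\Pi_0}^2 = \mathrm{Id}$. The identity \eqref{odd} reads $E(x) = -\mathcal R_{\Pi_0} E(\mathcal R_\Pi(x))$ for all $x\in\Om$, and since $\mathcal R_\Pi$ maps $\Om^+$ bijectively onto $\Om^-$ and fixes $\La$ pointwise, this expresses the values of $E$ on $\Om^-$ in terms of its values on $\Om^+$.

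For part (i): let $\La_0\subset\Om^+$ be a perfect plane of $E$, say $\La_0$ is a connected open subset of a hyperplane $\Sigma$ with unit normal $n$, and $n\times E = 0$ on $\La_0$. Set $\La_0' := \mathcal R_\Pi(\La_0)\subset\Om^-$; this is a connected open subset of the hyperplane $\Sigma' := \mathcal R_\Pi(\Sigma)$, whose unit normal is $n' := \mathcal R_{\Pi_0} n$. For $x'\in\La_0'$ write $x' = \mathcal R_\Pi(x)$ with $x\in\La_0$. Then by \eqref{odd}, $E(x') = -\mathcal R_{\Pi_0} E(x)$, and hence
\[
n'\times E(x') = (\mathcal R_{\Pi_0} n)\times\bigl(-\mathcal R_{\Pi_0} E(x)\bigr) = -\det(\mathcal R_{\Pi_0})\,\mathcal R_{\Pi_0}\bigl(n\times E(x)\bigr) = 0,
\]
using the standard identity $(Qa)\times(Qb) = \det(Q)\, Q(a\times b)$ for an orthogonal matrix $Q$, together with $n\times E(x)=0$ on $\La_0$. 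Thus $n'\times E = 0$ on $\La_0'$, and since $\La_0'$ is a non-void open connected subset of the hyperplane $\Sigma'$, it is by definition a perfect plane of $E$, as claimed.

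For part (ii): suppose $E$ is singular at $y\in\Om^+$, and let $y^\ast := \mathcal R_\Pi(y)\in\Om^-$. Arguing by contradiction, suppose $E$ were regular (say bounded, or real-analytic) in some ball $B_\rho(y^\ast)\subset\Om^-$. Since $\mathcal R_\Pi$ is an affine isometry, $\mathcal R_\Pi(B_\rho(y^\ast)) = B_\rho(y)$, and \eqref{odd} gives $E(x) = -\mathcal R_{\Pi_0} E(\mathcal R_\Pi(x))$ for $x\in B_\rho(y)$. As $\mathcal R_{\Pi_0}$ is a fixed invertible linear map and $x\mapsto\mathcal R_\Pi(x)$ is a smooth diffeomorphism, the right-hand side is as regular on $B_\rho(y)$ as $E$ is on $B_\rho(y^\ast)$, forcing $E$ to be regular at $y$ — contradicting the singularity of $E$ at $y$. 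Hence $E$ is singular at $y^\ast$. I do not anticipate a serious obstacle here: the only point requiring a little care is keeping the affine reflection $\mathcal R_\Pi$ and its linear part $\mathcal R_{\Pi_0}$ straight when computing cross products (this is where the factor $\det(\mathcal R_{\Pi_0}) = -1$ enters harmlessly, since it multiplies a quantity already equal to zero in part (i)), and in part (ii) noting that ``singular'' is a local notion preserved under composition with smooth diffeomorphisms and invertible linear maps.
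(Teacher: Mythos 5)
Your proof is correct and is exactly the argument the paper intends: the paper states Corollary \ref{cor:1} without proof as an "immediate" consequence of the reflection identity \eqref{odd}, and your verification (the orthogonal-reflection cross-product identity for (i), and the removable-singularity/contradiction argument via the affine isometry $\mathcal{R}_{\Pi}$ for (ii)) simply supplies the routine details. No gaps.
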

 From the second assertion of Corollary \ref{cor:1}, we see the number of singularities of $E$ in $\Om$ must be even. This fact will be
 utilized to justify the uniqueness within polyhedral scatterers, since the total field has only one singular point (i.e., the dipole point of
 the electric dipole) in the exterior $\textbf{D}^e$ of the scatterer $\textbf{D}$ under investigation.
\begin{rem}\label{remark 3.1}
From Lemma \ref{Lem:RP}, one can see that the reflected electric field takes the form
\ben
E^{re}(x)=-\mathcal{R}_{\Pi}E^{in}(\mathcal{R}_{\Pi}(x))\qquad \mbox{with}\quad\Pi:=\{x_3=0\}
\enn
where $E^{in}$ is the electric dipole given in (\ref{electricdipole}). If $E^{in}=pe^{ikx\cdot d}$ with $p\bot d$ and $|d|=1$, one can prove the existence and uniqueness of the Silver-M\"{u}ller radiation solution $E-E^{in}-E^{re}$; see \cite{LWZ}.
\end{rem}

\subsection{Uniqueness with a single incoming wave}\label{PolyRef}

In this section, we will present two uniqueness results in inverse electromagnetic scattering by general polyhedral-type scatterer in $\R^3$. The basic
ideal is from our previous work on the acoustic case \cite{HL2014}.

\begin{thm}\label{mainresultbounded}
Let $\textbf{D}$ be a bounded perfect polyhedral scatterer.
Then, for a fixed wave number $k>0$, the boundary $\pa \textbf{D}$ can be
uniquely determined by the electric far-field pattern  $E_\infty^{sc} (\hat{x}, y, p)$ for all $\hat{x}\in\mathbb{S}^2$ generated by a single electric dipole $E^{in} (\cdot, y, p)$ located at
a fixed dipole point $y\in \textbf{D}^e$ and a fixed polarization $p\in\R^3$.
\end{thm}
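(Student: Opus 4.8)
The plan is to transplant the path-and-reflection argument of \cite{HL2014} from the acoustic point-source problem to the electromagnetic dipole problem, with Lemma \ref{Lem:RP} and Corollary \ref{cor:1} playing the role of the acoustic reflection principle and Rellich's lemma.

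\textbf{Step 1 (reduction to coincidence of total fields).} Suppose two bounded perfect polyhedral scatterers $\textbf{D}_1,\textbf{D}_2$, for the same dipole point $y$ and polarization $p$, produce the same electric far-field pattern $E^{sc}_\infty(\hat x,y,p)$ for all $\hat x\in\mathbb{S}^2$. Since $\textbf{D}_1,\textbf{D}_2$ are bounded, the classical (vector) Rellich lemma together with unique continuation for the Maxwell system — the electromagnetic analogue of Theorem \ref{farscat}, see \cite{CK2013,Monk} — gives $E_1^{sc}=E_2^{sc}$, and hence $E_1=E_2=:E$, in the unbounded connected component $G$ of $\R^3\ba(\ov{\textbf{D}_1}\cup\ov{\textbf{D}_2})$; note $y\in G$ because $E^{in}$ is the only singularity of the total field and both exteriors are connected. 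Assume, for contradiction, that $\textbf{D}_1\neq \textbf{D}_2$.

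\textbf{Step 2 (producing a perfect plane).} After possibly swapping indices, $\pa \textbf{D}_1$ has a flat piece $\mathcal{P}_0\subset\pa \textbf{D}_1\cap G$ with $\mathcal{P}_0\cap \textbf{D}_2^e\neq\emptyset$. The perfectly conducting condition (\ref{pec}) gives $\nu\times E_1=0$ on $\pa \textbf{D}_1$, so $\nu\times E=0$ on $\mathcal{P}_0$; extending maximally and connectedly inside $\textbf{D}_2^e\ba\{y\}$, using analyticity of $E$ there, yields a perfect plane $\mathcal{P}$ of $E$ in the sense of Definition \ref{def:perfectplane}, lying on a hyperplane $\Pi$. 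If the maximally extended $\mathcal{P}$ is unbounded, then arguing exactly as in the proof of Theorem \ref{polyplane} — writing $E$ on $\mathcal{P}$ as the reflected incident dipole field plus a Silver-M\"uller radiating remainder and letting $|x|\to\infty$ along $\mathcal{P}$ — forces $E\equiv 0$, a contradiction. Hence $\mathcal{P}$ is bounded, and by maximality its relative boundary lies on $\pa \textbf{D}_2$, so $\mathcal{P}$ abuts the faces of the polyhedron $\pa \textbf{D}_2$.

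\textbf{Step 3 (reflection/path argument and contradiction).} Reflecting $\textbf{D}_2$ and $E$ across $\Pi$ via Lemma \ref{Lem:RP} and invoking Corollary \ref{cor:1}(i), one obtains new perfect planes attached to the reflected faces; iterating this reflection along a finite path connecting the cells of $\pa \textbf{D}_2$ — a construction that uses the finiteness of the cell set of $\pa \textbf{D}_2$ and the boundedness of $\textbf{D}_2$, exactly as in \cite{HL2014} — one is eventually led to a perfect plane $\Pi'$ of $E$ together with a $\Pi'$-symmetric subdomain $\Om\subset\textbf{D}_2^e$ containing $y$. Corollary \ref{cor:1}(ii) then forces $E$ to be singular also at $y^\ast:=\mathcal{R}_{\Pi'}(y)\in\Om\subset\textbf{D}_2^e$, so $E$ has at least two singularities in $\textbf{D}_2^e$ (in fact an odd number, since singularities occur in reflected pairs), contradicting that the dipole point $y$ is the unique singularity of the total field in $\textbf{D}_2^e$. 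Therefore $\textbf{D}_1=\textbf{D}_2$.

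\textbf{Main obstacle.} The delicate step is the reflection/path bookkeeping in Step 3: one must verify that the chain of reflections across the faces of $\pa \textbf{D}_2$ terminates, that no maximally extended perfect plane encountered along the way escapes to infinity without forcing $E\equiv0$, and that the chain genuinely produces a symmetric subdomain of $\textbf{D}_2^e$ containing $y$ rather than pushing $y$ onto $\pa \textbf{D}_2$ or off to infinity. Tracking the singular point $y$ under these reflections — rather than merely following exponential factors, as in the plane-wave case of Theorem \ref{polyuni} — is what makes the single-dipole case genuinely harder; Steps 1 and 2 are comparatively routine given the reflection principle and the results already established.
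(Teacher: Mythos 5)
Your Step 1 matches the paper, and your overall strategy (perfect planes, reflection via Lemma \ref{Lem:RP}, and a contradiction from the reflected dipole point being a second singularity) is the paper's strategy. But there are two genuine problems. First, your Step 2 contains a step that would fail: you claim that if the maximally extended perfect plane $\mathcal{P}$ is unbounded, then arguing as in Theorem \ref{polyplane} and letting $|x|\to\infty$ along $\mathcal{P}$ forces $E\equiv 0$. That argument works for plane-wave incidence precisely because $u^{in}+u^{re}$ does not decay, whereas here the incident field is an electric dipole: both $E^{in}(\cdot,y,p)$ and $E^{sc}$ satisfy the Silver--M\"uller radiation condition and decay like $O(1/|x|)$, so the vanishing of $\nu\times E$ at infinity along an unbounded plane yields no contradiction and certainly not $E\equiv 0$. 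Fortunately the paper never needs boundedness of the perfect plane, so this flawed step is also unnecessary; but as written it is wrong.

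Second, and more importantly, your Step 3 — which you yourself flag as the ``main obstacle'' — is exactly the content of the paper's proof, and it is not carried out; moreover your sketch of it misdescribes the argument. You describe ``iterating reflections along a finite path connecting the cells of $\partial \textbf{D}_2$'', using finiteness of the cell set and boundedness of $\textbf{D}_2$. The actual argument (both in \cite{HL2014} and in the paper) is not an iteration over faces: one (i) proves the perfect set $\mathcal{S}_1$ is closed (Lemma \ref{Lem:PSclose}), (ii) joins a point of the initial perfect plane to the dipole point $y$ by a continuous curve $\gamma$ and, using closedness/compactness of $\gamma\cap\mathcal{S}_1$, selects the \emph{last} perfect plane $\mathcal{P}^*$ met by $\gamma$ before reaching $y$; (iii) performs a \emph{single} reflection about the hyperplane $\Pi^*$ containing $\mathcal{P}^*$, constructing via connected components a symmetric domain $\Omega\subset \textbf{D}_1^e$, and shows $y\in\Omega$ because otherwise $\gamma$ would meet a further perfect plane after $t^*$, contradicting maximality of $t^*$; (iv) concludes from Corollary \ref{cor:1}(ii) that $E_1$ would be singular at $\mathcal{R}_{\Pi^*}(y)\in\Omega$, contradicting analyticity of $E_1$ in $\Omega\setminus\{y\}$. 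None of the compactness/last-plane/``$y\in\Omega$'' mechanism appears in your proposal, and your parenthetical that the singularities form ``an odd number, since singularities occur in reflected pairs'' is inconsistent (reflected pairs give an even count, which is precisely why a single dipole point leads to a contradiction). So the proposal identifies the right high-level route but leaves the decisive path-and-reflection construction unproved and partly misstated.
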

\begin{proof}
Assume that two bounded perfect polyhedral scatterers $D_1$ and $D_2$ generate the  same electric far-field pattern
$E_{\infty,1}^{sc} (\hat{x}, y, p)=E_{\infty,2}^{sc} (\hat{x}, y, p) $ on $\mathbb{S}^2$ due to
an electric dipole $E^{in} (\cdot, y, p)$ located at $y\in \Om_0$ with the polarization $p\in\R^3$.
Here, $\Om_0$ denotes the unbounded connected component of $D_1^e \cap D_2^e$. We are aimed at proving $\partial D_1=\partial D_2$.
By \cite[Theorem 6.10]{CK2013} and the unique continuation of solutions to the
Maxwell's equations, we see
\be\label{eq}
 E_1(\cdot, y, p)=E_2(\cdot, y, p) \quad\mbox{in}\quad \Omega_0\backslash\{y\}.
\en
 If $\partial D_1\neq \partial D_2$, without loss of generality we may always assume there exists a {\em perfect plane} $\mathcal {P}_0$ of $E_1$ in $D_1^e$.
 This follows from the relation (\ref{eq}) together with  the fact that $D_1$ and $D_2$ are both polyhedral scatterers in the sense of
 Definition \ref{def:polyhedral} and that $D_j^e$ for $j=1,2$  are connected.
Next, we shall carry out the proof by deriving a contraction.  For clarity we divide our proof into three steps.

\emph{\textbf{Step 1: Path argument.}}
Let $\mathcal {S}_1$ be the perfect set of $E_1$ as defined in Definition \ref{def:perfectset}.
The perfect set $\mathcal {S}_1\neq\emptyset$, because $\mathcal {P}_0\subset \mathcal {S}_1$.
Choose a point $y_0\in \mathcal {P}_0$  and a continuous injective curve
$\g(t)$ for $t\geq 0$ connecting $y_0$ and the dipole point $y$ of the electric dipole. Without loss of generality, we assume that
$\gamma(0)=y_0$ and $\gamma(T)=y$ for some $T>0$. Let $\mathcal{M}$ be the set of intersection points of $\gamma$ with $\mathcal {S}_1$, i.e.,
\ben
\mathcal{M}=\{y_n: \mbox{there exist a perfect plane $\mathcal {P}_n \subset \mathcal {S}_1$ and $t_n\geq0$ such that
$\mathcal {P}_n\cap \gamma(t_n)=y_n$ } \}.
\enn
It is clear that the points contained in $\mathcal{M}$ are uniformly bounded, since $\gamma(t)$ is a bounded curve with finite length.
By Lemma \ref{Lem:PSclose}, we know $\mathcal {S}_1$ is closed. This implies that $\mathcal{M}$ is closed since $\gamma(t)$ is also
closed. Hence, $\mathcal{M}$ is compact, and we can find some $t^*>0$
such that there exists a perfect plane $\mathcal {P}^*$ of $E_1$ intersecting with $\gamma(t)$ at $t=t^*$ and that
\ben
 \gamma(t)\cap \mathcal{M}=\emptyset,  \qquad\forall\quad T> t>t^*.
\enn
Denote by $\nu^\ast$ be the unit normal to $\mathcal {P}^*$
 Note that $t^* <T$, because $\nu^\ast\times E_1$ vanishes at $\gamma(t^*)$ but is singular at $\gamma(T)=y$.

 \emph{\textbf{Step 2: Reflection argument.}}
 Let $\Pi^*$ be the hyperplane containing $\mathcal {P}^*$.
 We now apply  Corollary \ref{cor:1} to prove the existence of a symmetric open set $\Omega\supset\mathcal {P}^*$ with respect to  $\Pi^*$
 such that $\Omega\subset D_1^e$ and $y\in \Omega$. This will be done in the following paragraph.

 Choose $x^+=\gamma(t^*+\epsilon)$ for $\epsilon>0$ sufficiently small such that
 $t^*+\epsilon<t<T$ and define $x^-:=\mathcal{R}_{\Pi^*}(x^+)$. Let $G^\pm$ be the connected component of
 $\R^3\ba\{\overline{D}_1\cup \mathcal {P}^*\}$ containing $x^\pm$, and denote by $\Omega^\pm$ the connected component of
 $G^\pm\cap \mathcal{R}_{\Pi^*}(G^\mp)$ containing $x^\pm$. Setting $\Omega:=\Om^+\cup\mathcal {P}^*\cup \Om^-$, we observe that
 $\Omega\subset D_1^e$ is a connected symmetric domain with respect to $\Pi^*$ whose boundary is a subset of
 $(\mathcal {S}_1\cup\pa D_1)\cup \mathcal{R}_{\Pi^*}(\mathcal {S}_1\cup\pa D_1)$.
 Thus, by Corollary \ref{cor:1} $({\rm i})$, $\nu\times E_1=0$ on $\pa\Om$.
 It now remains to prove $y\in\Om$.
 Assume to the contrary that $y\notin\Om$. Since $x^+=\gamma(t^*+\epsilon)\in \Omega^+$ and the continuous curve $\gamma(t)$ for
 $t^*+\epsilon<t<T$ lies in $D_1^e$,  $\gamma(t)$ must intersect $\partial \Omega\cup\mathcal{R}_{\Pi^*}(\partial D_1) $ at
 some $t^{**}>t^*+\epsilon$.
This implies the existence of a new perfect plane intersecting $\gamma(t)$ at $t^{**}>t^*$, contradicting the obtained perfect plane
$\mathcal {P}^*$ at $t=t^*$. Hence $y\in\Om$.

 \emph{\textbf{Step 3: End of the proof.}}
Let $\Om$ and $\Pi^*$ be given as in Step 2.
We observe that $y^*:=\mathcal{R}_{\Pi^*}(y)\in \Om$,  since $y\in\Om$ and
$\Om$ is a connected symmetric domain with respect to $\Pi^*$.
By Corollary \ref{cor:1} $({\rm ii})$, $E_1$ is also singular at $y^*\,(\neq y)$. However,
this is a contradiction to the analyticity of $E_1$ in $\Om\subset D_1^e\ba\{y\}$.
The proof is complete.
\end{proof}

\begin{rem}\label{remark 3.2} To the best of our knowledge, it is still an open problem how to uniquely determine the shape of a bounded perfect conductor with a single incoming wave (for example, a plane wave or an electric dipole). It was proved in \cite{HL2014} that a sound-soft ball can be uniquely determined by an acoustically point source wave. However, it remains unclear to us the unique determination of perfectly conducting ball with an electric dipole.
\end{rem}

For electromagnetic scattering from a locally perturbed rough surface, we have the analogous uniqueness result to Theorems \ref{polypoint} and \ref{polyuni} as follows.

\begin{thm}\label{mainresultunbounded}
Let $\textbf{D}$ be an unbounded perfect polyhedral scatterer with locally rough boundary $\pa \textbf{D}$.
Then, for a fixed wave number $k>0$, the boundary $\pa \textbf{D}$ can be
uniquely determined by the electric far-field data  $ E^{sc}_\infty (\hat{x}, y, p)$ for all $\hat{x}\in \mathbb{S}^2_+$ generated by a single electric dipole $E^{in} (\cdot, y, p)$ located at
a fixed dipole point $y\in \textbf{D}^e$ and a fixed polarization $p\in\R^3$.
\end{thm}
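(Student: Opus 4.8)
The plan is to mimic the strategy of Theorem \ref{mainresultbounded}, but to carry out the path and reflection arguments inside the locally perturbed half-space $\textbf{D}^e=\{x_3>f(\tilde x)\}$ rather than in the exterior of a bounded scatterer, and to use the half-space Rellich-type identity together with the reflection principle of Lemma \ref{Lem:RP} in place of the radiation condition at the end. First I would assume two unbounded perfect polyhedral scatterers $\textbf{D}_1$ and $\textbf{D}_2$ with locally rough boundaries produce the same electric far-field pattern $E^{sc}_{\infty,1}(\hat x,y,p)=E^{sc}_{\infty,2}(\hat x,y,p)$ for all $\hat x\in\mathbb{S}^2_+$, generated by a fixed electric dipole at $y\in\textbf{D}_1^e\cap\textbf{D}_2^e$ with polarization $p$. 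The first step is the analogue of Theorem \ref{farscat}/Theorem \ref{hardplane} for Maxwell: extend $E^{sc}_j$, $H^{sc}_j$ by the Maxwell reflection principle (odd/even reflection of the appropriate components across $\{x_3=0\}$ outside $B_R^+$) to a full neighborhood of infinity, observe that the extended field is a Silver--M\"uller radiating solution there, and apply Rellich's lemma plus unique continuation for the Maxwell system to conclude $E_1(\cdot,y,p)=E_2(\cdot,y,p)$ in $\Omega_0\setminus\{y\}$, where $\Omega_0$ is the connected component of $\textbf{D}_1^e\cap\textbf{D}_2^e$ containing $y$ and a neighborhood of infinity in $x_3>0$. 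I would flag this extension step as needing the precise statement of the half-space Silver--M\"uller condition used in \cite{LWZ}, but it is routine given the acoustic template.

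Assuming $\pa\textbf{D}_1\neq\pa\textbf{D}_2$, the relation $E_1=E_2$ on $\Omega_0$ together with the perfectly conducting condition on the boundary piece where they differ forces the existence of a perfect plane $\mathcal{P}_0$ of $E_1$ inside $\textbf{D}_1^e$, exactly as in Theorem \ref{mainresultbounded}, Definitions \ref{def:polyhedral}--\ref{def:perfectset}. Then I would run the path argument of Step 1: pick $y_0\in\mathcal{P}_0$, join $y_0$ to the dipole point $y$ by a continuous injective curve $\gamma$ lying in $\textbf{D}_1^e$, let $\mathcal{M}$ be the intersection of $\gamma$ with the perfect set $\mathcal{S}_1$, and use Lemma \ref{Lem:PSclose} (perfect set is closed) to extract a last perfect plane $\mathcal{P}^*$ met at some $t^*<T$, beyond which $\gamma$ avoids $\mathcal{S}_1$. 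The subtlety compared with the bounded case is that $\gamma$ and $\mathcal{M}$ need not be bounded a priori; I would argue boundedness either by choosing $\gamma$ to be a polygonal path of finite length contained in a fixed large ball (possible since both $y_0$ and $y$ are fixed and $\textbf{D}_1^e$ is connected), so the intersection set is compact and the maximum-$t^*$ selection still works.

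Step 2 is the reflection argument: with $\Pi^*$ the hyperplane containing $\mathcal{P}^*$, build a connected symmetric open set $\Omega\subset\textbf{D}_1^e$ with respect to $\Pi^*$ containing $\mathcal{P}^*$ and the dipole point $y$, by intersecting the component of $\R^3\setminus(\overline{\textbf{D}}_1\cup\mathcal{P}^*)$ containing $\gamma(t^*+\epsilon)$ with its reflection, precisely as in the bounded proof; Corollary \ref{cor:1}(i) gives $\nu\times E_1=0$ on $\pa\Omega$, and the "no earlier intersection" property of $\mathcal{P}^*$ forces $y\in\Omega$. Step 3 then concludes: by Corollary \ref{cor:1}(ii) the reflected point $y^*=\mathcal{R}_{\Pi^*}(y)\in\Omega$ is also a singularity of $E_1$, contradicting analyticity of $E_1$ in $\Omega\subset\textbf{D}_1^e\setminus\{y\}$; hence $\pa\textbf{D}_1=\pa\textbf{D}_2$. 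The main obstacle I anticipate is not any single step in isolation but making the path/reflection geometry genuinely work in the unbounded setting: one must ensure the symmetric domain $\Omega$ constructed around $\mathcal{P}^*$ stays inside $\textbf{D}_1^e$ even though $\mathcal{P}^*$ may itself be an unbounded perfect plane extending toward infinity, and that reflecting across $\Pi^*$ does not push part of $\Omega$ below the rough interface; controlling this, together with confirming the Maxwell reflection/extension across $\{x_3=0\}$ is compatible with the perfectly conducting condition (so that the reflected field $E^{re}=-\mathcal{R}_\Pi E^{in}(\mathcal{R}_\Pi(\cdot))$ from Remark \ref{remark 3.1} fits the picture), is where the real work lies.
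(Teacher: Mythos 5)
Your proposal follows essentially the same route as the paper: first the Maxwell analogue of Theorem \ref{farscat} (reflection across $\{x_3=0\}$, Rellich's lemma and unique continuation to pass from the half-sphere far-field data to $E_1=E_2$ in the common exterior), then a verbatim repetition of the path and reflection arguments of Theorem \ref{mainresultbounded}. The unbounded-geometry worries you flag at the end are resolved in the paper simply by the observation that the unperturbed ground plane $\{x_3=0\}$ is known a priori and the perturbations are compactly supported, so the bounded-case arguments apply unchanged.
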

\begin{proof}
Using the reflection principle of Lemma \ref{Lem:RP}, one can prove the one-to-one correspondence between the electric far-field data $ E^{sc}_\infty (\hat{x}, y, p)$ for all $\hat{x}\in \mathbb{S}^2_+$ and the electric field $E^{sc}(x)$ for $x\in \textbf{D}^e\backslash\{y\}$; see Theorem \ref{farscat} in the acoustic case. Repeating the reflection and path arguments in the proof of Theorem \ref{mainresultbounded}, one can verify Theorem \ref{mainresultunbounded} in the same manner.
Note that although the surface $\partial \textbf{D}$ is unbounded, the reflection and path arguments for bounded polyhedral conductors are still applicable, because the unperturbed ground plane $\{x_3=0\}$ is supposed to be known in advance.
\end{proof}

\section*{Acknowledgement}
The work of G. Hu is supported by the NSFC grant (No. 11671028) and NSAF grant (No. U1530401). The work of B. Yan is supported by the National Natural Science Foundation of China (No. 61603226) and the Fund of Natural Science of Shandong Province (No. ZR2018MA022). The authors would like to thank Xiaodong Liu (CAS, China) for reading through the original version of this manuscript and for his comments and generous discussions which help improve the presentation.

\end{document}